\numberwithin{equation}{section}
\newtheorem{theorem}{Theorem}[section]
\newtheorem{lemma}{Lemma}[section]
\newtheorem{corollary}{Corollary}[section]
\newtheorem{definition}{Definition}[section]
\newtheorem{example}[theorem]{Example}
\title{A New Approach from Lattice of Subgroup Sets to Generalized Solvable {Extension} Formations
}
\author{
Ran Li, Long Miao\footnote{Corresponding author}, Wenxia Zhou, Yinan Chen}
\date{}
\begin{document}
\maketitle

\begin{abstract}	
{\small  
\noindent In this paper, we establish the decomposition of morphisms
from lattice of subgroup sets to generalized solvable extension formations. To achieve this, we develop a unified framework involving maximal subgroup functors, generating formation morphism and contraction-extension functors. In particular, solvability-induced sets of maximal subgroups are determined and generating formation morphism gives rise to generalized
solvable extension formations.
}\\
\noindent {\small {\it {AMS classification:}} 20D05; 20D10; 20D30; 20E25; 20E28; 20F19}\\
\noindent {\small {  Keywords: Finite group, generalized solvable {extension} formation, classification of finite simple group, morphism decomposition, generalized solvable formation,  contraction-extension functor.} }
\end{abstract}

\section{Introduction}

\qquad In the book \textit{Finite Group Theory}\citep{Martin-2008}, Isaacs states the local-global principle: global information can be deduced from appropriate data about local subgroups. Focusing on Sylow p-subgroups, numerous results have been established(see\citep{Glauberman-1968},\citep{Gorenstein-1994}). 
{In addition, maximal subgroups are guaranteed to exist in any finite group and serve as key local structures.}
Craven \citep{Craven-2023} lists all maximal subgroups to study the finite simple
groups of type $F_4$, $E_6$ and twisted $E_6$ over all finite fields.\\ \\
\qquad Building on Craven’s work, we extend the framework to the level of  maximal subgroup sets and formations shown in Diagram \eqref{eq:1.1}.  
The motivation also stems from the fundamental theorem of group homomorphism shown in Diagram \eqref{eq:1.2}. 
\begin{equation}\label{eq:1.1}
\begin{tikzcd}[column sep=normal, row sep=3em]
MAX(G) \arrow[d,dashed, "f"'] \arrow[r, "u"] & F(\mathcal{G}) \\
\overline{MAX_2(G)}\arrow[ur, "g"'] &
\end{tikzcd}
\begin{tikzcd}[column sep=large, row sep=3em]
\mathcal{Z} \arrow[d,dashed, mapsto, "f"'] \arrow[r, mapsto, "u"] & \mathfrak{F}\\
f(\mathcal{Z})\arrow[ur, mapsto, "g"'] &
\end{tikzcd}
\end{equation}

\begin{equation}\label{eq:1.2}
\begin{tikzcd}[column sep=normal, row sep=3em]
G\arrow[d, "\phi"'] \arrow[r, "u"] & G'\\
G/Ker(u)\arrow[ur, "\tau"'] &
\end{tikzcd}
\begin{tikzcd}[column sep=large, row sep=3em]
g\arrow[d, mapsto, "\phi"'] \arrow[r, mapsto, "u"] & g'\\
gker(u)\arrow[ur, mapsto, "\tau"'] &
\end{tikzcd}
\end{equation}

We postpone our notation and terminology to Section 4 and will introduce, at this point, only notation
and terminology needed for Diagram \eqref{eq:1.1} and Diagram \eqref{eq:1.3}. Unless otherwise stated, all groups considered in this paper are finite. 
Let $Max(G)$ be the set of all maximal subgroups of \(G\) and $Max_2(G)$ the set of all second maximal subgroups of \(G\). We denote the power set of $Max(G)$ by $\operatorname{MAX}(G)$ and the power set of $Max_2(G)$ by $\operatorname{MAX}_2(G)$.
$\overline{MAX_2(G)}$ is defined as the collection of suitable sets of second maximal subgroups of \(G\) and $F(\mathcal{G})$ is the
set of all formations. \\

As noted by Huppert\citep{Huppert-1967} in his book, if a group $G$ belongs to the formation of all solvable groups $\hat{\mathfrak{S}}$ and $N\lhd G$, then $N\in \hat{\mathfrak{S}}$ and $G/N\in \hat{\mathfrak{S}}$. Its contrapositive statement
is a classic result of the local-global principle: non-solvability behaves in a “local-to-global” manner.
Motivated by the transitivity of non-solvability, we {consider} the following sets of maximal subgroups to explore non-solvable finite groups:
$$\mathcal{X}=\{M\lessdot G \mid  M~is~non-solvable\}.$$
Ballester-Bolinches and Ezquerro, in their book \textit{Classes of Finite Groups} \citep{Ballester-2006}, states: {Helmut Wielandt proposed giving priority after the classification to the extension of these brilliant results of the theory of finite soluble groups to the more ambitious universe of all finite
groups.} {In fact, set $\mathcal{X}$ enlarges the methods of the solvable cases to finite non-nessarily solvable, according to Wielandt's proposal.} Table 1 shows some examples about solvability of $M$ in non-solvable groups.

\begin{center}
\begin{tabular}{*{6}{c}}
\hline
$G$ & $M~$ &Solvability~of~$M$   \\
\hline
\multirow{3}{*}{$S_5$} & $S_3{:}~2$&$T$   \\
 & $S_4$   &$T$    \\
& $A_5$   &$F$    \\
\hline
\multirow{3}{*}{$A_5$} & $A_4$&$T$   \\
 & $D_{10}$   &$T$    \\
& $S_3$   &$T$    \\
\hline
\end{tabular} 
\par\small \vspace{0.05cm}\textbf{Table 1:}  Solvability of $M$.
\end{center}
 In this context,  we focus on Diagram \eqref{eq:1.3} to illustrate the decomposition of morphisms between lattice of maximal subgroup sets and generalized
solvable extension formations. Here, solid arrows indicate explicitly defined morphisms, whereas dashed arrows refer to morphisms whose existence follows from the corresponding morphism construction.
\begin{equation}\label{eq:1.3}
\begin{tikzcd}[column sep=5em, row sep=5em]
  & MAX(G) \arrow[r, dashed,"u"] \arrow[d, dashed,"f"']\arrow[dl, "\Phi"'] &  F(\mathcal{G})\\
  MAX_2(G)\arrow[r, "H"'] 
  & \overline{MAX_2(G)}
  \arrow[ur, bend left=0, "g" description] 
\end{tikzcd}
\end{equation}

The paper is organized as follows. Section 2 is about {generalized solvable extension formations} and examples.
In section 3, we define various decomposition and state their main properties.  Section 4 covers the existing results and lemmas, which will be used in the sequel. Section 5 is devoted to set $\mathcal{X}$ and its localization associated with functor $\mathcal{L}_1$ and $\mathcal{L}_2$. The interplay between these sets and formation $\mathfrak{J},~\mathfrak{F}'~\&~\mathfrak{F}'' $ are shown. It is worth mentioning that the proofs in this section highlights the subtle differences in the behavior of functor $\mathcal{L}_1$ and $\mathcal{L}_2$. Section 6 covers relative applications in this topic.

\section{Generalized
Solvable Extension Formations}
In \textit{The Theory of Classes of Groups}, Guo\citep{Guo-2000} mentioned the method of constructing local formation by the concept of formation functions. {A formation function }\citep{Doerk-1992} is a function $$f:~\mathbb{P}\rightarrow  \{\text{formations}\},$$
where 
$\mathbb{P}$ denotes the set of all primes. In particular, one takes a group function defined on the set of all simple groups:
\[
f:\{\text{simple groups}\}\longrightarrow \{\text{formations}\},
\]
assigning to each simple group a formation. In the composition satellite approach, Shemetkov\citep{Shemetkov-1997} first assigns a formation to each composition factor. In this section, we extend this approach by assigning a formation to set of simple groups, rather than just singletons. Let $\mathcal{S}$ be the collection of all finite simple groups and $P(\mathcal{S})$ be the power set of $\mathcal{S}$. Now we consider
\[
f_1: P(\mathcal{S})\longrightarrow \{\text{formations}\},
\]
$$s\mapsto\mathfrak{F}$$
s.t. $\mathfrak{F}$ consists of all groups whose non-abelian chief factors are isomorphic to a direct product of copies of some $A$. Here, $A$ runs through all elements of $s$. In particular, if $s=\emptyset$, then $f_1(s)=\mathfrak{S}$ the formation of all finite solvable groups.
It is trivial that  formation $f_1(s)$ is extension-closed for every set $s$.\\

Next, we introduce certain group classes so that we could define special subsets $s\subset S$, which will be applied to $f_1$. In recent years, 
several theories have been proposed to study various generalized $p$-local group classes with respect to chief factors (see, e.g.,  \citep{Miao-2018},\citep{Gao-2021}). In parallel with these developments, we now introduce generalized solvable group classes with respect to maximal subgroups. There have been numerous studies to investigate finite groups in this perspective: Thompson\citep{Thompson-1968} classified minimal simple groups(non-solvable group all of whose
 proper subgroups are solvable); Guralnick\citep{Guralnick-1983} studied the structure of groups in which every maximal subgroup has prime power index. \\

In 2015, Demina and Maslova\citep{Maslova-2013}
introduced the following group class with respect to solvability and index of maximal subgroups:
$$\mathfrak{J}_{pr}=\{ G~\mid  \forall M\in Max(G),~either~M~is~ solvable~or~|G:M|~is~a~prime~power\}.$$
And the possible structure of the non-abelian composition factors of a group in $\mathfrak{J}_{pr}$ was provided\citep{Demina-2015}. {We denote the set of these simple groups as $s_1$ and define $f_1(s_1)$ as the extension formation $\hat{\mathfrak{J}_{pr}}$ associated with $\mathfrak{J}_{pr}$.}

And a more general group class was defined in \citep{Ran-2024}:
$$\mathfrak{J}=\{ G\mid  \forall M\in Max(G),~either~M~is~p-solvable~or~|G:M|~is~a~prime~power\}.$$ 
{We define $s_2$ as the collection of the corresponding non-abelian composition factors of a group in $\mathfrak{J}$ and he extension formation $\hat{\mathfrak{J}_{pr}}$ as $f(s_2)$.}\\

{Let $\mathfrak{F}$ be a formation. A group $G$ is called 
a minimal non-$\mathfrak{F}$ group if $G\notin \mathfrak{F}$ but all proper subgroups of $G$ belongs to $\mathfrak{F}$.} Recall that a group $G$ is minimal non-nilpotent\citep{Huppert-1967}  if $G$ is non-nilpotent and every proper subgroup $M$ of $G$ is nilpotent. Similarly, we define a group $G$ is minimal non-solvable if $G$ is non-solvable and every maximal subgroup $M$ of $G$ is solvable. Specifically, $A_5$ is an example.  In fact, we are motivated to consider the following group classes: 
$$ \mathfrak{F'}=\left\{
\begin{array}{clr}
       &      & \ M\ is\ p-solvable,~or\\
G\mid    &  \forall M\in Max(G),    & minimal~non-solvable,~or\\
     &      & \ |G:M|\ is\ a\ q-prime\ power
\end{array}\right\} ;$$
By localization, we get the definition of minimal non-$p$-solvable groups and define:
$$ \mathfrak{F''}=\left\{
\begin{array}{clr}
       &      & \ M\ is\ p-solvable,~or\\
G\mid    &  \forall M\in Max(G),    & minimal~non-p-solvable,~or\\
     &      & \ |G:M|\ is\ a\ q-prime\ power
\end{array} \right\} .$$
\textbf{Remark:} {$q$ is not necessarily inequal to $p$. So there are {at least, not exactly,} one case happens for every group lying in the given group class.\\
These group classes extend the group class $\mathfrak{J}_{pr}$ and the  inclusion relation 
$$\mathfrak{J}_{pr}\subset\mathfrak{J}\subset\mathfrak{F'}\subset\mathfrak{F''}$$
is illustrated by the following examples. 
\begin{example}{\rm\cite[ Example 1.1]{Ran-2024}}
{Consider $G=PSL(2,2^4)$} {of order 4080 and} suppose $p=17$. {Notably, maximal subgroup $A_5$ is non-solvable but $p$-solvable,}   illustrating that $G\notin\mathfrak{J}_{pr}$ but $G\in\mathfrak{J}$.
\end{example}
\begin{example}
{Consider} $G=A_7$ of order 2520 and suppose $p=7$. Since maximal subgroups of $PSL(2,7)$ are solvable ($Z_7:Z_3$ and $S_4$), $PSL(2,7)$ is minimal non—solvable. So $G\in \mathfrak{F}'\backslash \mathfrak{J}$.
\begin{center}
\begin{tabular}{*{6}{c}}
\hline
structure&order&{prime power index}&solvability  &p-solvability \\
\hline
 $A_6$&360  &T(7)& F&T\\
\hline
 $PSL(2,7)$&168 & F(15)&F & F\\
\hline
 $S_{5}$&120 &F(21)&F &T\\ \hline
 $(A_4\times 3): 2$&72 &F(35)&T &T\\ \hline
\end{tabular}    
\par\small \vspace{0.05cm}\textbf{Table 2:}  Maximal subgroups of $A_7$.
\end{center}
\end{example}
\begin{example}
{Consider} Sporadic Mathieu group $G=M_{11}$ of order 7920 and suppose $p$ = 11. Table 4 shows that $PSL(2,11)$ is not minimal non-solvable but minimal non-11-solvable. Hence, $G\in\mathfrak{F''}\backslash \mathfrak{F}'$.
\begin{center}
\begin{tabular}{*{5}{c}}
\hline
structure & order &Solvability& p-solvability & prime power index \\
\hline
$M_{10}$ & 720 &F& T & T(11) \\
\hline
$PSL_2{(11)}$ & 660 &F& F & F(12) \\
\hline
$M_9:S_2$ & 144 &F& T & F(55) \\
\hline
$S_5$ & 120 &F& T & F(66) \\
\hline
$Q_8:S_3$ & 48 &T& T & F(165) \\
\hline
\end{tabular}
\vspace{0.05cm}\par\small  \textbf{Table 3:} Maximal subgroups of $M_{11}$.\\
\vspace{0.25cm}
\begin{tabular}{*{3}{c}}
\hline
structure & solvability & p-solvability \\
\hline
$A_5$ & F & T \\
\hline
$11:5$ & T & T \\
\hline
$D_{12}$ & T & T \\
\hline
\end{tabular}
\vspace{0.05cm}
\par\small \textbf{Table 4:}  Maximal subgroups of $PSL_2(11)$.
\end{center}
\end{example}

\noindent Applying $f_1$ in a similar way, we define the extension formations $\hat{\mathfrak{F}'}$~and~  $\hat{\mathfrak{F}''}$ with respect to non-abelian composition factor. For later use, we also define:
$$\mathfrak{F}_1=\{G\mid \forall M\lessdot G,\ M\ is\ {p-solvable}\};$$
$$\mathfrak{F}_2=\{G\mid \forall M\lessdot G,\ M\ is\ { either~p-solvable~or~minimal~non-p-solvable}\}$$
and get extension formations $\hat{\mathfrak{F}_1}$ and $\hat{\mathfrak{F}_2}$. Such extension formations are referred to as generalized solvable extension formations and Diagram \eqref{eq:3.1} shows the relationship among them.
\begin{equation}\label{eq:3.1}
\begin{tikzpicture}[>=Stealth, every node/.style={font=\small}, node distance=22mm]

   nodes
  \node (Sa) at (0,0) {$\hat{\mathfrak{F}''}$};
\node (Sb) at (0,-1) {$\hat{\mathfrak{F}'}$};
\node (Sc) at (0,-2) {$\hat{\mathfrak{J}}$};
\node (Sd) at (0,-3) {$\hat{\mathfrak{J}_{pr}}$};
\node (Se) at (0,-4) {$\hat{\mathfrak{F}_{1}}$};
\node (Sf) at (0,-5) {$\mathfrak{S}$};
\node (Fe) at (1.5,-1.8) {$\hat{\mathfrak{F}_2}$};  
   horizontal tau arrows (left->middle)
  \draw[-] (Sa) -- node[above] {} (Sb);
  \draw[-] (Sb) -- node[above] {} (Sc);
  \draw[-] (Sc) -- node[above] {} (Sd);
  \draw[-] (Sd) -- node[above] {} (Se);
  \draw[-] (Se) -- node[above] {} (Sf);
  
\draw[-] (Fe) -- node[above] {} (Se);
\draw[-] (Fe) -- node[above] {} (Sa);

\end{tikzpicture}
\end{equation}

\section{Decomposition of formation morphism}
In this section, we focus on the decomposition of formation morphisms. They are the maximal subgroup functors, local formation morphism and generating formation morphism, along with various contraction and extension functors(Eg. $\mathcal{S},\mathcal{L}_1,\mathcal{L}_2, \mathcal{P}_c, \mathcal{P}_{ci}$).
\begin{definition}We use the following terminology for the morphisms that arise in Diagram \eqref{eq:1.3}.
\begin{equation}
\tag{\ref{eq:1.3}}
\begin{tikzcd}[column sep=5em, row sep=5em]
  & MAX(G) \arrow[r, dashed,"u"] \arrow[d, dashed,"f"']\arrow[dl, "\Phi"'] &  F(\mathcal{G})\\
  MAX_2(G)\arrow[r, "H"'] 
  & \overline{MAX_2(G)}
  \arrow[ur, bend left=0, "g" description]
\end{tikzcd}
\end{equation}
(1). A morphism
$
u:{MAX}(G)\rightarrow\{\text{formations}\},
$
is called a {local formation morphism}.\\
(2). A morphism $g$ from  $MAX_2(G)$ to $F(\mathcal{G})$:
$g:MAX_2(G)\rightarrow F(\mathcal{G})$
with operation $g(\mathcal Y)=\{\mathfrak F\in F(\mathcal G)\mid~\text{if}~\mathcal Y=\emptyset,~then~G\in \mathfrak F\}$ is called a generating morphism.\\
(3). A morphism $f$ is a maximal subgroup functor if it assigns to a suitable maximal subgroup set $\mathcal{X}$ a possibly empty  set $f(\mathcal{X})$ of second maximal subgroups of $G$ with $g(f(\mathcal{X}))\in F(\mathcal{G})$.\\
(4). A morphism 
$\Phi:~MAX(G)\rightarrow MAX_2(G)$
with operation $\Phi(\mathcal{Y})=\{H\lessdot\lessdot G|~\exists M\in \mathcal{Y}~s.t.~ H\lessdot M\}$
is called a natural morphism.
\end{definition}
\noindent\textbf{Remark:}\\
(1). Comparing with the concept of formation function in Section 2,
the morphism $u$ offers an alternative way of assigning formations..\\
(2). The maximal subgroup functor is motivated by subgroup functor defined in \citep{Salomon-1987}.
 It assigns to each group $G$ a possibly empty set f(G) of subgroups of G satisfying $\theta(f(G))=f(\theta(G))$ for any
isomorphism $\theta$:~$G\rightarrow$ $G'$.\\
(3). {Various contraction and extension functors arise as refinements of the maximal subgroup functor, particularly in constructions related to localization.}
\begin{definition}
Let {$MAX_2(G)$} be the power set of $Max_2(G)$ and $\mathcal{Z}$ is a subset of $Max_2(G)$. {Suppose $\mathcal{Y}$ is a set of maximal subgroups and then $\Phi(\mathcal{Y})$ is the set of corresponding second maximal subgroups.} \\
(1). A contraction {functor} $T_c$  is a map with operation
$${T_c}:~MAX_2(G)\rightarrow MAX_2(G)$$
$$\Phi(\mathcal{Y})\mapsto\Phi(\mathcal{Y})\cap \mathcal{Z}\triangleq \Phi(\mathcal{Y})_\mathcal{Z}$$
and satisfies that $H(\Phi(\mathcal{Y}))=f$ is a maximal subgroup functor. In other words, Diagram \eqref{eq:1.3} holds.
{Here, $H$ is a composition of contraction functor ${T_{c_i}}$ including ${T_{c}}$ and we write $H=\prod_{i=1}^{n} T_{c_i}.$}\\
(2). An extension functor is  a  map $T_e$ with operation:
$${T_e}:~MAX_2(G)\rightarrow MAX_2(G)$$
$$\Phi(\mathcal{Y})\mapsto\Phi(\mathcal{Y})\cup \mathcal{Z}\triangleq \Phi(\mathcal{Y})^\mathcal{Z}$$
{and satisfies that} $H(\Phi(\mathcal{Y}))=f$ is a maximal subgroup functor.
In other words, Diagram \eqref{eq:1.3} holds in the case of $H=\prod_{i=1}^{n} T_{c_i}T_{e_i}.$ Here, $H$ is a composition of contraction~$\&$ extension functor ${T_{c_i}}~\&~{T_{e_i}}$ including ${T_{e}}$.

\end{definition}

{Now we introduce various contraction functors.} For instance, strictly second maximal subgroups play an essential role in exploring the group structures. In 1995, Flavell\citep{Flavell-1995} established an upper bound for the number of maximal subgroups containing a strictly second maximal subgroup. Building on this, Meng and Guo \citep{Meng-2019} proved that if  \( H \) is a second maximal subgroup of \( G \) with \( G/H_G \) is solvable, then \( |G:M| \) is divisible by \( m(G,H) - 1 \). Further extending this work, they\citep{Meng-2019+} studied WSM-groups(every second maximal subgroup is in $Max_2^*(G)$), focusing on the relation between $Max_2(G)$ and $Max_2^*(G)$. In fact,  there exists the morphism $g$ between strictly second maximal subgroups and generalized p-local group classes. The behavior of $Max_2^*(G)$ on $\mathcal{Y}_1$ helps us to define group class $S^{\#}_{p}$\citep{Wang-2022}:\\
$$g:~\mathcal{Y}_1\longmapsto formation~of~all~p-solvable~groups$$
$$g:~Max_2^*(G)\cap\mathcal{Y}_1\longmapsto S^{\#}_{p}$$
Here, we define $\mathcal{L}_1=\{H\lessdot\lessdot G|~p||H| \}$ and $\mathcal{T}_{1}=\{H\lessdot\lessdot G|~ \exists M\lessdot G~s.t~H_G=M_G\}$. Then $\mathcal{Y}_1=\mathcal{L}_1\cap \mathcal{T}_{1}$.\\

For later use, we introduce {strict functor }$S$ and localized functor $\mathcal{L}_1~\& ~\mathcal{L}_2$.
\begin{definition}
Let {$MAX_2(G)$} be the power set of $Max_2(G)$ and $Max_2^*(G)$ is a subset of $Max_2(G)$. {Suppose $\mathcal{Y}$ is a set of maximal subgroup and then $\Phi(\mathcal{Y})$ is the set of corresponding second maximal subgroup.} A strict {functor} $\mathcal{S}$ {associated with $\mathcal{Y}$} is a map with operation
$$\mathcal{S}:~MAX_2(G)\rightarrow MAX_2(G)$$
$$\Phi(\mathcal{Y})\mapsto\Phi(\mathcal{Y})\cap Max_2^*(G)\triangleq \Phi(\mathcal{Y})_{\mathcal{S}}$$
and satisfies that $H(\Phi(\mathcal{Y}))=f$ is a maximal subgroup functor. In other words, Diagram \eqref{eq:1.3} holds.
\\Here, $H$ is a composition of contraction~$\&$ extension functor ${T_{c_i}}~\&~{T_{e_i}}$ including $\mathcal{S}$. 
\end{definition}

Apart from that, Sylow theorem is a classic localization, providing detailed information about  subgroups with respect to one fixed prime divisor $p$. Recent work by Wang etal.\citep{Wang-2022} has expanded this area of research by defining the set of second maximal subgroups whose orders are divisible by $p$. More recently, Miao et al.\citep{Miao-2024} focused on the set of maximal subgroups whose orders are divisible by $p$ and consider the corresponding second maximal subgroups. The correspondence between these sets and various formations are demonstrated: 
$$g:~{L_1}\cap{Max_2^*(G)}\longmapsto G\in \hat{\mathfrak{F}},~\mathfrak{F}~is~a~formation~of~all~p-solvable~groups.$$
$$g:~{L_2}\cap{Max_2^*(G)}\longmapsto G\in \hat{\mathfrak{F}},~\mathfrak{F}~is~a~formation~of~all~solvable~groups$$
Here, ${L}_{1}=\{H\lessdot\lessdot G|~ P|~|H|\}$ and ${L}_{2}=\{H\lessdot\lessdot G \mid \exists M\in Max(G,H)~s.t.~p|~|M|\}.$
\\
\\
{Now we could define localized functors with repsect to a fixed prime $p$.}
\begin{definition}
Let {$MAX_2(G)$} be the power set of $Max_2(G)$ and ${L}_{1}=\{H\lessdot\lessdot G|~ p|~|H|\}$ is a subset of $Max_2(G)$. {Suppose $\mathcal{Y}$ is a set of maximal subgroup and then $\Phi(\mathcal{Y})$ is the set of corresponding second maximal subgroup.} A contraction {functor} $\mathcal{L}_{1}$ {associated with $\mathcal{Y}$} is a map with operation
$$\mathcal{L}_{1}:~MAX_2(G)\rightarrow MAX_2(G)$$
$$\Phi(\mathcal{Y})\mapsto\Phi(\mathcal{Y})\cap {L}_{1}\triangleq \Phi(\mathcal{Y})_{\mathcal{L}_{1}}$$
and satisfies that $H(\Phi(\mathcal{Y}))=f$ is a maximal subgroup functor. In other words, Diagram \eqref{eq:1.3} holds. Here, $H$ is a composition of contraction~$\&$ extension functor ${T_{c_i}}~\&~{T_{e_i}}$ including $\mathcal{L}_{1}$.
\end{definition}
Analogously, we could get localized functor $\mathcal{L}_{2}$ associated with $${L}_{2}=\{H\lessdot\lessdot G \mid \exists M\in Max(G,H)~s.t.~p|~|M|\}.$$

\textit{3. Property Functors}\\
Now we define property functors with respect to core relation and $|M:H|$.

\begin{definition}
Let {$MAX_2(G)$} be the power set of $Max_2(G)$ and ${P}_{c}=\{H\lessdot\lessdot G|~ \forall M\lessdot G~s.t~H_G=M_G\}$ is a subset of $Max_2(G)$. {Suppose $\mathcal{Y}$ is a set of maximal subgroup and then $\Phi(\mathcal{Y})$ is the set of corresponding second maximal subgroup.} A contraction {functor} $\mathcal{P}_c$ {associated with $\mathcal{Y}$} is a map with operation
$$\mathcal{P}_c:~MAX_2(G)\rightarrow MAX_2(G)$$
$$\Phi(\mathcal{Y})\mapsto\Phi(\mathcal{Y})\cap {P}_c\triangleq \Phi(\mathcal{Y})_{\mathcal{P}_c}$$
and satisfies that $H(\Phi(\mathcal{Y}))=f$ is a maximal subgroup functor. In other words, Diagram \eqref{eq:1.3} holds. Here, $H$ is a composition of contraction~$\&$ extension functor ${T_{c_i}}~\&~{T_{e_i}}$ including $\mathcal{P}_{c}$.
\end{definition}
Similarly, we define set 
$$\mathcal{P}_{ci}=\{H\lessdot\lessdot G|~ \forall M\lessdot G~s.t~H_G=M_G~or~|M:H|~is~not~a~prime~power\}$$  
and get property functor  $\mathcal{P}_{ci}$.\\
\\
\noindent To standardize the localization, we define the following functors in quotient form.
$$T_c\cdot \mathcal{Y}(G/L)) =\mathcal{Y}(G/L)\cap \mathcal{Z}(G/L);$$
$$T_e\cdot\mathcal{Y}(G/L) =\mathcal{Y}(G/L)\cup \mathcal{Z}(G/L);$$
$$S\cdot\mathcal{Y}(G/L)= Max_2^*(G/L)\cap \mathcal{Y}(G/L);$$
$$\mathcal{L}_1\cdot\mathcal{Y}(G/L)={L}_1(G/L)\cap \mathcal{Y}(G/L);$$
$$\mathcal{L}_2\cdot\mathcal{Y}(G/L)={L}_2(G/L)\cap \mathcal{Y}(G/L);$$
$$\mathcal{P}_c\cdot\mathcal{Y}(G/L)={P}_c(G/L)\cap \mathcal{Y}(G/L);$$
$$\mathcal{P}_{ci}\cdot\mathcal{Y}(G/L)={P}_{ci}(G/L)\cap \mathcal{Y}(G/L).$$

Here, \( \mathcal{Y}(G/L), \mathcal{Z}(G/L) \in MAX_2(G/L) \).\\

\section{Notations and Preliminaries}
Our notation are standard and all groups considered in this paper are finite. {In this section, we review the results that we need for the proofs that follow.}
\begin{definition}{\rm\cite[1.8]{Gratzer-2011}}\label{lattice}
An order ($L$; $\le$) is a lattice if $sup\{a, b\}$ and $inf\{a, b\}$ exist for all $a, b\in L$.    
\end{definition}
\begin{lemma}
Let $A$ be a nonempty set and $P(A)$ the power set of $A$. Then 
$(P(A),\le)$ is a lattice.
\end{lemma}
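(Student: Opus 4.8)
The plan is to verify directly the two requirements in Definition \ref{lattice}: that $(P(A),\le)$, with $\le$ taken to be set inclusion $\subseteq$, is a partially ordered set, and that every pair of elements admits both a supremum and an infimum \emph{inside} $P(A)$. First I would confirm that $\subseteq$ is a partial order on $P(A)$. Reflexivity ($X\subseteq X$) and transitivity (if $X\subseteq Y$ and $Y\subseteq Z$ then $X\subseteq Z$) are immediate from the definition of inclusion, while antisymmetry (if $X\subseteq Y$ and $Y\subseteq X$ then $X=Y$) is exactly the extensionality principle for sets. This establishes the order $(P(A),\subseteq)$ on which the lattice definition can be applied.

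Next, for arbitrary $X,Y\in P(A)$ I would exhibit the supremum and infimum explicitly as the union and intersection. The one point deserving attention — and the only place the hypothesis is genuinely used — is the closure fact that $X\cup Y\subseteq A$ and $X\cap Y\subseteq A$, so that both $X\cup Y$ and $X\cap Y$ are again elements of $P(A)$. I would then check that $X\cup Y=\sup\{X,Y\}$: it is an upper bound since $X\subseteq X\cup Y$ and $Y\subseteq X\cup Y$, and it is the least such, because any $Z\in P(A)$ with $X\subseteq Z$ and $Y\subseteq Z$ satisfies $X\cup Y\subseteq Z$. Dually, $X\cap Y=\inf\{X,Y\}$: it is a lower bound, and any common lower bound $W$ satisfies $W\subseteq X\cap Y$. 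Since both $\sup\{X,Y\}$ and $\inf\{X,Y\}$ exist in $P(A)$ for every pair, $(P(A),\subseteq)$ meets Definition \ref{lattice} and is a lattice.

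There is no substantial obstacle here, as the result is elementary; the only step worth flagging is the closure check, which guarantees that the candidate supremum and infimum are attained \emph{within} $P(A)$ rather than merely in some larger ambient collection. As a remark that is not needed for the statement, one may observe that $\emptyset$ and $A$ serve as the least and greatest elements, so $(P(A),\subseteq)$ is in fact a bounded (indeed complete) lattice, but the binary case above is all that Definition \ref{lattice} requires.
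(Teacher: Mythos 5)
Your proof is correct and complete: inclusion is a partial order on $P(A)$, and $X\cup Y$ and $X\cap Y$ are the supremum and infimum, which is exactly what Definition \ref{lattice} demands. The paper itself states this lemma without proof, treating it as a standard fact, and your argument is precisely the canonical verification any such proof would give, so there is nothing to reconcile between the two.
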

\begin{lemma}{\rm\cite[Theorem 3.1]{Miao-2024+}}\label{core-solvable} Let $G$ be a group. Then $G$ is solvable if and only if every second maximal subgroup of G satisfies the condition that there exists a maximal subgroup $M\in Max(G,H)$ such that $H_G < M_G.$\end{lemma}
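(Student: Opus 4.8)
\emph{Plan.} Throughout write $H_G$ for the core of $H$ in $G$. The one elementary fact I will use constantly is that $H\le M$ forces $H_G\le M_G$, because $H_G\trianglelefteq G$ and $H_G\le H\le M$. Hence for a second maximal subgroup $H$ the stated condition \emph{fails} exactly when $H_G=M_G$ for every $M\in Max(G,H)$. The plan is to treat the two implications separately: necessity by a single quotient reduction, and sufficiency by induction on $|G|$ through a monolithic reduction, reformulating the negation of the condition as the existence of a suitable ``witness'' second maximal subgroup.

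For necessity (solvable $\Rightarrow$ condition), I would fix a second maximal subgroup $H$ and pass to $\overline G=G/H_G$. Maximality is preserved at both levels, so $\overline H=H/H_G$ is again second maximal, it is core-free in $\overline G$ by construction, and $\overline G$ is solvable. It then suffices to find a maximal subgroup $\overline M\ge\overline H$ with $(\overline M)_{\overline G}\neq 1$, since its preimage $M$ will satisfy $M_G\supsetneq H_G$. Assuming for contradiction that every maximal subgroup of $\overline G$ above $\overline H$ is core-free, I would pick one such $M$ and invoke the structure of solvable primitive groups: $\overline G$ has a unique minimal normal subgroup $N$, elementary abelian and self-centralizing, with $MN=\overline G$ and $M\cap N=1$. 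Then $\overline H\cap N\le M\cap N=1$, so $[\overline G:\overline H N]=[M:\overline H]>1$; thus $\overline H N$ is a proper subgroup containing both $\overline H$ and $N$, and any maximal subgroup above it contains $N$ and so has nontrivial core, contradicting the assumption. This would settle the forward direction.

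For sufficiency (condition $\Rightarrow$ solvable) I would argue the contrapositive: if $G$ is non-solvable, I will exhibit a \emph{witness}, a second maximal $H$ with $H_G=M_G$ for all $M\in Max(G,H)$. One first observes that the property ``every second maximal subgroup admits a strict core increase'' passes to quotients, which fuels the induction. If some minimal normal $N$ has $G/N$ non-solvable, a witness $\overline H=H/N$ obtained by induction in $G/N$ lifts to a witness $H\supseteq N$ in $G$, since every maximal subgroup of $G$ above such an $H$ contains $N$ and the cores correspond. Otherwise every minimal normal subgroup has solvable quotient; two distinct ones would embed $G$ into a product of solvable groups, so $G$ is monolithic with unique minimal normal subgroup $N$ and $G/N$ solvable, whence (as $G$ is non-solvable) $N=T_1\times\cdots\times T_k$ is non-abelian with $C_G(N)=1$. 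Here either $H_G=1$ or $H_G\supseteq N$; an $H$ with $H_G\supseteq N$ can never be a witness, since its image in the solvable $G/N$ admits a strict core increase by the forward direction. So a witness must have $H_G=1$, and one verifies this is equivalent to $HN=G$. The whole problem thus reduces to producing a \emph{second maximal supplement of the socle}: a second maximal $H$ with $HN=G$.

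The construction of this supplement is the hard part. The natural attempt is to choose a core-free maximal subgroup $M$ — one exists because non-solvability forces $G/M_G$ to be non-solvable for some maximal $M$, whence $M_G=1$ by monolithicity — and then a maximal subgroup $H$ of $M$ avoiding $D:=M\cap N$; indeed $D\not\le H$ yields $HD=M$ and therefore $HN=G$, so such an $H$ is exactly the desired witness. This succeeds whenever $D\not\le\Phi(M)$. The delicate case is $D\le\Phi(M)$, which forces $M$ to be solvable and in particular covers the regular-socle situation $D=1$; there no maximal subgroup of this particular $M$ supplements $N$, and I would have to locate the supplement inside a different maximal subgroup. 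Resolving this case will require the detailed structure of the maximal subgroups of $G$ relative to $\mathrm{soc}(G)$ — built from a maximal subgroup of a simple factor $T_1$ together with the transitive action of $G$ on $\{T_1,\dots,T_k\}$ — and it is precisely here that information about the finite simple groups and their subgroups enters, in keeping with the paper's standing reliance on the classification. Granting the second maximal supplement, the witness contradicts the hypothesis and the induction closes.
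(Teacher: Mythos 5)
First, a remark on the comparison itself: the paper contains no proof of this lemma to compare against --- it is imported verbatim as \cite[Theorem 3.1]{Miao-2024+} from the cited dissertation. So your proposal can only be judged on its own terms, and on those terms it is incomplete, though much of it is sound. The necessity direction is complete and correct: the passage to $\overline G=G/H_G$, the structure of solvable primitive groups, and the observation that any maximal subgroup containing $\overline H N$ has nontrivial core all work (just choose $M$ to be the image of a maximal subgroup in which $H$ is maximal, so that $[M:\overline H]>1$ is guaranteed). In the sufficiency direction, the quotient-passing, the monolithic reduction, and the reduction to a ``second maximal supplement of the socle'' are also correct, although one sentence is literally false: for a second maximal $H$, $H_G=1$ is \emph{not} equivalent to $HN=G$ (take $H=A_4\lessdot S_4\lessdot S_5$ with $N=A_5$: the core is trivial but $HN=A_5$); what is true, and what your argument actually uses, is that \emph{being a witness} is equivalent to $HN=G$.

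The genuine gap is the residual case you explicitly defer, and it cannot be deferred, because it \emph{is} the theorem. Suppose every core-free maximal subgroup $M$ of $G$ satisfies $M\cap N\le\Phi(M)$. Then no second maximal supplement of $N$ exists anywhere, not just inside ``this particular $M$'': if $H\lessdot M$ with $HN=G$, then $N\nleq M$, so $M$ is core-free, and Dedekind's law gives $M=M\cap HN=H(M\cap N)$, forcing $M\cap N\nleq H$ --- contradicting $M\cap N\le\Phi(M)\le H$. So in this configuration there is no witness at all, i.e.\ $G$ would be a non-solvable group satisfying the lemma's condition: a counterexample to the lemma itself. Hence the remaining task is not to ``locate the supplement inside a different maximal subgroup'' by inspection, but to prove this configuration is impossible; ``granting the second maximal supplement'' is granting precisely the hard half of the theorem. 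A concrete way to close it, in keeping with the paper's reliance on the classification: by the Frattini argument, for $P\in\mathrm{Syl}_p(N)$ any maximal $M\ge N_G(P)$ satisfies $MN=G$ (hence $M$ is core-free) and $M\cap N\ge N_N(P)=\prod_i N_{T_i}(P_i)$; since $\Phi(M)$ is nilpotent, it suffices to produce one prime $p$ for which the Sylow $p$-normalizer of the simple factor $T$ is non-nilpotent. That every non-abelian finite simple group admits such a prime is a known classification-dependent fact (it is the simple-group case of the theorem that a finite group all of whose Sylow normalizers are nilpotent is itself nilpotent). Without this or an equivalent input, your induction does not close.
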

\begin{lemma}{\rm\cite[COROLLARY 3.2]{Miao-2024+}}\label{strict-core-solvable} 
 Let $G$ be a group. Then $G$ is solvable if and only if every strictly second maximal subgroup of G satisfies the condition that there exists a maximal subgroup $M\in Max(G,H)$ such that $H_G < M_G.$\end{lemma}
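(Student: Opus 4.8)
The plan is to deduce Lemma~\ref{strict-core-solvable} from Lemma~\ref{core-solvable} by showing that, as far as the stated core inequality is concerned, the strictly second maximal subgroups already carry all the relevant information. Throughout, for a second maximal subgroup $H$ I will say that $H$ \emph{fails} the condition if there is no $M\in Max(G,H)$ with $H_G<M_G$; since $H\le M$ forces $H_G\le M_G$, this is the same as requiring $H_G=M_G$ for every $M\in Max(G,H)$.

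One direction is immediate. If $G$ is solvable, then by Lemma~\ref{core-solvable} every second maximal subgroup satisfies the condition, and since every strictly second maximal subgroup is in particular a second maximal subgroup, i.e.\ $Max_2^*(G)\subseteq Max_2(G)$, the condition holds for all of them as well.

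For the converse I would argue by contraposition: assuming $G$ is non-solvable, I would produce a \emph{strictly} second maximal subgroup that fails the condition. By Lemma~\ref{core-solvable}, in its contrapositive form, the set of failing second maximal subgroups is nonempty, so among them I may choose one, say $H^{*}$, of maximal order. The crucial observation is that failure is inherited upward: if $H$ fails and $H\le K$ with $K$ a second maximal subgroup, then $K$ fails too. Indeed $Max(G,K)\subseteq Max(G,H)$, so for each $M\in Max(G,K)$ we have $H_G\le K_G\le M_G=H_G$, which forces $K_G=M_G$. With this in hand I claim $H^{*}$ is strictly second maximal. If it were not, there would be some $M_0\in Max(G,H^{*})$ in which $H^{*}$ is not maximal, hence a subgroup $K$ with $H^{*}<K<M_0$ and $K$ maximal in $M_0$; then $K$ is a second maximal subgroup of strictly larger order which, by the upward-inheritance observation, also fails the condition, contradicting the maximality of $|H^{*}|$. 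Thus $H^{*}$ is a strictly second maximal subgroup failing the condition, which contradicts the hypothesis that every strictly second maximal subgroup satisfies it; therefore $G$ is solvable.

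The only genuinely delicate point is the upward-inheritance step together with its use in the maximal-order selection. I must check that the intermediate subgroup $K$ extracted from the failure of strictness is a bona fide second maximal subgroup, which it is, being maximal in the maximal subgroup $M_0$, and that the core squeeze $H^{*}_G\le K_G\le M_G$ really collapses to equality for every maximal $M$ above $K$. Everything else is bookkeeping with the containment $Max(G,K)\subseteq Max(G,H^{*})$ and the finiteness of $G$, the latter guaranteeing that a failing subgroup of maximal order exists.
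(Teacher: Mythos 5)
Your proof is correct, but note first that the paper contains no proof of this statement to compare against: both Lemma~\ref{core-solvable} and Lemma~\ref{strict-core-solvable} are pure imports, cited as Theorem 3.1 and Corollary 3.2 of the dissertation [Miao-2024+]. What your argument establishes, then, is that the two citations are not logically independent: the strict version is a formal consequence of the plain one. The forward direction is, as you say, immediate from $Max_2^{*}(G)\subseteq Max_2(G)$; the converse you prove is genuinely stronger than the converse of Lemma~\ref{core-solvable} (its hypothesis quantifies over fewer subgroups), and your bridge is sound: failure of the core condition passes to second maximal overgroups, because for $H\le K\le M$ with $M\in Max(G,K)\subseteq Max(G,H)$ the squeeze $H_G\le K_G\le M_G=H_G$ collapses to equality, and a failing second maximal subgroup $H^{*}$ of maximal order must be strictly second maximal, since otherwise one inserts $H^{*}<K\lessdot M_0$ with $K$ second maximal of strictly larger order (the only unstated point, that $H^{*}\neq M_0$, holds because a second maximal subgroup is never maximal in $G$). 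It is worth observing that your climbing device is the same one the paper itself uses informally in Section~5 (e.g.\ in the proof of the lemma attached to Diagram~\eqref{eq:5.6}, where a second maximal subgroup $H\notin Max_2^{*}(G)$ is climbed up to a strictly second maximal $H^{M_7}$); your version, anchored by the maximal-order selection, is actually the more careful one, since it justifies why the top of such a chain can be taken strictly second maximal. Practically, your approach buys the paper the option of stating Lemma~\ref{strict-core-solvable} as a corollary of Lemma~\ref{core-solvable} rather than as a second independent citation.
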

\begin{lemma} {\rm\cite[ Lemma 2.13]{Wang-2022}}\label{X=HN} Let $H$ be a second maximal subgroup of a group $G$ and $X\in Max(G,H)$. Assume that $N$ is a normal subgroup of $G$ such that $N\leq X$. If $N\nleq H$, then $X=HN$.\end{lemma}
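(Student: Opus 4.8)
The plan is to argue purely from the maximality that is packaged into the hypotheses, without invoking any structural classification. Recall that $X\in Max(G,H)$ means $X$ is a maximal subgroup of $G$ containing $H$; combined with $H\lessdot\lessdot G$, this forces $H$ to be a \emph{maximal} subgroup of $X$, i.e.\ the chain $H\lessdot X\lessdot G$ witnesses $H$ as second maximal. The entire argument will hinge on applying this maximality to the single intermediate subgroup $HN$.

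First I would verify that $HN$ is genuinely a subgroup of $G$. This is exactly where the normality hypothesis $N\trianglelefteq G$ is used: the product of a subgroup with a normal subgroup is again a subgroup. Next I would locate $HN$ inside the interval between $H$ and $X$. On one side, $H\leq HN$ is immediate. On the other side, since $H\leq X$ and $N\leq X$ by assumption, their product satisfies $HN\leq X$. Hence $H\leq HN\leq X$.

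Now I would invoke the maximality of $H$ in $X$: the interval $[H,X]$ contains only $H$ and $X$, so either $HN=H$ or $HN=X$. The first alternative is equivalent to $N\leq H$, which is excluded by the standing hypothesis $N\nleq H$. Therefore $HN=X$, as claimed.

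The argument is entirely formal, so I do not expect a serious obstacle; the only points requiring care are the correct reading of the hypotheses and the placement of each assumption. Specifically, one must be sure that $H\lessdot\lessdot G$ together with $X\in Max(G,H)$ really delivers the maximality of $H$ \emph{inside} $X$ (rather than $H$ being second maximal in $G$ through some unrelated chain), and that it is the normality of $N$ — not merely $N\leq X$ — that licenses forming the subgroup $HN$. Once these are pinned down, the dichotomy coming from maximality closes the proof in one line, with the hypothesis $N\nleq H$ ruling out the degenerate case.
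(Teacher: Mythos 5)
Your argument hinges entirely on the claim that $H\lessdot\lessdot G$ together with $X\in Max(G,H)$ forces $H$ to be maximal \emph{in} $X$. That claim is false, and it is precisely the point this paper is careful about: $Max(G,H)$ is the set of maximal subgroups of $G$ \emph{containing} $H$, and a second maximal subgroup need only be maximal in \emph{some} member of $Max(G,H)$. The second maximal subgroups that are maximal in \emph{every} member of $Max(G,H)$ are exactly the strictly second maximal subgroups $Max_2^*(G)$; if your claim were true, the distinction between $Max_2(G)$ and $Max_2^*(G)$ (and the whole functor $\mathcal{S}$ of Section 3) would be vacuous. Concretely, in $G=S_4$ take $H=\langle (12)\rangle$: then $H\lessdot S_3\lessdot G$, so $H$ is second maximal, and the Sylow $2$-subgroup $X=\langle (1324),(12)\rangle\in Max(G,H)$, yet $H<\langle (12),(34)\rangle<X$, so the interval $[H,X]$ is not $\{H,X\}$ and your dichotomy ``$HN=H$ or $HN=X$'' has no justification. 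This is not a corner case one can ignore: in the paper's own use of the lemma (e.g.\ the proof of Theorem 5.3), $X$ is a maximal subgroup $M_3$ selected by a core condition $H_G<(M_3)_G$, which is in general \emph{not} the maximal subgroup through which $H$ is second maximal, so the situation your argument covers is not the one needed.

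The lemma is nevertheless true, but a correct proof must use the chain $H\lessdot M\lessdot G$ witnessing second maximality and treat two cases. If $N\leq M$: then $H\leq HN\leq M$ (here normality of $N$ makes $HN$ a subgroup), and $H\lessdot M$ with $N\nleq H$ gives $HN=M$; since also $HN\leq X$ and $M$ is maximal with $X\neq G$, we get $X=M=HN$. If $N\nleq M$: then $G=MN$, and since $N\leq X$ also $G=MX$; hence $M\neq X$, so $H\leq M\cap X<M$ and maximality of $H$ in $M$ forces $M\cap X=H$. Consequently $M\cap N\leq M\cap X=H$, so $M\cap N=H\cap N$, and the product formula gives $|G:HN|=\frac{|M||N|/|M\cap N|}{|H||N|/|H\cap N|}=|M:H|$, while $|G:X|=|M:M\cap X|=|M:H|$ as well; combined with $HN\leq X$ this yields $HN=X$. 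Your one-line maximality argument only handles (a special instance of) the first case. Note finally that the paper itself does not prove this statement but cites it from [Wang--Miao--Gao--Liu, Comm. Algebra 50 (2022), Lemma 2.13], so the burden of the missing case cannot be discharged by appeal to anything in this paper.
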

\begin{lemma}\label{L-index} 
Let $L$ be a minimal normal subgroup of $G$, $M,N$ be maximal subgroups of $G$ that do not contain $L$.
If the indices of $M, N$ in $G$ are different prime powers, then $L\in \mathfrak{F}$. Here, $$\mathfrak{F} =\{G| ~H/K~is~abelian~or~H/K~is~a~direct~product~of~ PSL(2,7)\}.$$
\end{lemma}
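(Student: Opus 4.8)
The plan is to push the hypotheses from $G$ down onto the minimal normal subgroup $L$, then down onto its non-abelian simple composition factor, where the classification of finite simple groups pins down $PSL(2,7)$. Throughout write $|G:M| = p^a$ and $|G:N| = q^b$ with $p \neq q$.

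First I would convert the external index data into internal index data for $L$. Since $M$ is maximal, $L$ is normal, and $L \not\leq M$, the product $LM$ is a subgroup strictly containing $M$, so $LM = G$; the second isomorphism theorem then gives $|L : L\cap M| = |G:M| = p^a$, and likewise $|L : L\cap N| = |G:N| = q^b$, with $L\cap M$ and $L\cap N$ proper in $L$ because both indices exceed $1$. Thus $L$ carries two proper subgroups whose indices are powers of the distinct primes $p$ and $q$. If $L$ is abelian it is an elementary abelian chief factor of $G$ and so $L \in \mathfrak{F}$ immediately; hence I assume $L$ is non-abelian and write $L = S_1\times\cdots\times S_n$ with each $S_i\cong S$ a fixed non-abelian simple group.

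The key intermediate step is a reduction lemma: \emph{any proper subgroup $H\leq L$ of prime-power index $r^c$ forces $S$ to possess a proper subgroup of $r$-power index.} Set $T_i=\prod_{j\neq i}S_j$, so that $L/T_i\cong S$, and consider $HT_i/T_i\leq L/T_i$, whose index $|L:HT_i|$ divides $|L:H|=r^c$. If $HT_i=L$ held for every $i$, then $H$ would be a subdirect product of $S^n$; by the Remak/Goursat description of subdirect products of non-abelian simple groups, a \emph{proper} such subgroup must contain a full diagonal on some pair of coordinates, forcing $|S|$ to divide $|L:H|$. But $S$ is non-abelian simple, hence not a $p$-group, so $|S|$ is divisible by at least two primes and is not a prime power, contradicting $|L:H|=r^c$. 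Therefore $HT_i\neq L$ for some $i$, and $HT_i/T_i$ is a proper subgroup of $S\cong L/T_i$ of $r$-power index. Applying this once to $H=L\cap M$ and once to $H=L\cap N$ shows that $S$ has a proper subgroup of $p$-power index and a proper subgroup of $q$-power index; enlarging each to a maximal overgroup (whose index still divides the respective prime power, and which is core-free since $S$ is simple) shows $S$ has \emph{core-free maximal} subgroups of $p$-power and of $q$-power index.

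Finally I would invoke Guralnick's classification of subgroups of prime-power index in finite simple groups \citep{Guralnick-1983}, and hence the classification of finite simple groups. Collecting, for each entry of that list, the primes that arise as prime-power indices, the only non-abelian simple group admitting proper subgroups of two coprime prime-power indices is $PSL(2,7)\cong PSL(3,2)$: in its $PSL(2,7)$ guise a Borel subgroup has index $8=2^3$, while in its $PSL(3,2)$ guise a point stabilizer has index $7$. Consequently $S\cong PSL(2,7)$, so $L\cong PSL(2,7)^{\,n}$ is a direct product of copies of $PSL(2,7)$ and $L\in\mathfrak{F}$. I expect the main obstacle to be exactly this last verification: one must run through Guralnick's list case by case, ruling out the alternating groups $A_{p^a}$, the linear groups $PSL_m(q)$ for which $(q^m-1)/(q-1)$ is a prime power, and the remaining entries $PSL_2(11)$, $M_{11}$, $M_{23}$, and $PSU_4(2)\cong PSp_4(3)$, confirming in each case that only a single prime occurs, so that $PSL(2,7)$ is the unique source of two distinct primes; the subdirect-product bookkeeping in the reduction lemma is the secondary technical point.
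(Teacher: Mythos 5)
Your proposal is correct, and its skeleton matches the paper's proof: transfer the two indices into $L$ via $G=LM=LN$ and $|L:L\cap M|=|G:M|$, push them into a single simple direct factor of $L$, and then invoke Guralnick's classification of prime-power-index subgroups of simple groups \citep{Guralnick-1983} to force $PSL(2,7)$. The difference lies entirely in the descent-to-a-simple-factor step. The paper works by intersection: it encloses $L\cap M$ in a maximal subgroup $L_1$ of $L$, picks a factor $A_i\not\leq L_1$ so that $L=A_iL_1$ and $|A_i:A_i\cap L_1|=|L:L_1|$ is a $p$-power, does the same for $N$, and then transports the resulting subgroup of $A_j$ into $A_i$ by an isomorphism between the factors. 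You work by projection: you pass to $L/T_i\cong S$ and exclude the degenerate case (all projections of $H$ surjective) via the Remak/Goursat structure theorem for subdirect products of non-abelian simple groups, which forces $|S|$ to divide $|L:H|$, incompatible with prime-power index since $|S|$ is never a prime power. Your route uses a heavier (though standard) tool where the paper's intersection argument is elementary, but it buys two genuine improvements: you dispatch the abelian case of $L$ explicitly, whereas the paper applies Guralnick as though the factors $A_i$ were automatically non-abelian; and you avoid the paper's loosely justified step ``there exists $\alpha\in Aut(G)$ with $A_j=A_i^\alpha$'' (isomorphism of the factors does not by itself yield an automorphism of $G$ --- one needs conjugacy of the factors under $G$, or simply an abstract isomorphism, which is what your formulation uses). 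As a final remark, the descent can be done even more cheaply than in either argument: since some $A_i\not\leq H$ and $A_i\trianglelefteq L$, the index $|A_i:A_i\cap H|=|A_iH:H|$ divides $|L:H|$ and exceeds $1$, so $A_i\cap H$ is already a proper subgroup of $A_i$ of the required prime-power index, with no maximal overgroups and no subdirect-product theory needed.
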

\begin{proof}
It is obvious that $G=ML=NL$. Assume $|G:M|=p^a$ and $|G:N|=q^b$. As $|G:M|=|ML:M|=|L:L\cap M|$, we have $|L:L\cap M|=p^a$. Similarly, $|L:L\cap M|=q^b$. Since $L\cap M < L$, there exists a maximal subgroup $L_1$ of $L$ such that $L\cap M \le L_1$. Similarly, there exists a maximal subgroup $L_2$ of $L$ such that $L\cap N \le L_2$. Then $|L:L_1|~\mid~|L:L\cap M|=p^a$ and $|L:L_2|~\mid~|L:L\cap N|=q^b$. We set $|L:L_1|=p^{a_1}$ and $|L:L_2|=q^{b_1}$. Since $L$ is a minimal normal subgroup of $G$, we set $L=A_1\times A_2\times...A_n$, here $A_i$ are isomorphic simple groups. There exist $A_i$ with $A_i\nleq L_1$ and $A_j$ with $A_j\nleq L_2$. Then $L=A_iL_1=A_jL_2$. It follows that $|L_1:L_1\cap A_i|=p^{a_1}$ and $|L_2:L_2\cap A_j|=q^{b_1}$. Then there exist maximal subgroups $B_1$ and $B_2$ of $A_1$ such that $L_1\cap A_i \le B_1$ and $L_2\cap A_j \le B_2$. Then we set $|A_i:B_1|=p^{a_2}$ and $|A_j:B_2|=q^{b_2}$. As $A_i$ and $A_j$ are isomorphic simple groups, there exists $\alpha\in Aut(G)$ such that $A_j=A_i^\alpha$. Then $|A_i:B_2^\alpha|=|A_j^\alpha:B_2^\alpha|=|A_j:B_2|$. Hence, $|A_i:B_2^\alpha|=q^{b_2}$ and $|A_i:B_1|=p^{a_2}$. By the result in \citep{Guralnick-1983},  $A_i$ is isomorphic to $PSL(2,7)$ and $L$ is a~direct~product~of~ $PSL(2,7)$.
\end{proof}

\section{\texorpdfstring{Set $\Phi(\mathcal{X})$ and its localization}%
        {Set Phi(X) and its localization}}
In this section, we primarily focus on the localization of set $\Phi(\mathcal{X})$ and consider the core relation and the index $|M:H|$. Specifically, the set $\Phi(\mathcal{X})_{\mathcal{L}_1}$ serves as the primary focus of this section and $\Phi(\mathcal{X})_{\mathcal{L}_2}$ is an extension of $\Phi(\mathcal{X})_{\mathcal{L}_1}$. 
Moreover, various extension and property functors are employed and specific formations are involved in Diagram \eqref{eq:5.1}.\\
\begin{equation}\label{eq:5.1}
\begin{tikzpicture}[scale=1.8]

  \node (Sa) at (0,0) {$\Phi(\mathcal{X})_{\mathcal{L}_1}$};
  \node (Ta) at (2,0) {$\hat{\mathfrak{F}_1}$};
   \node (Fa) at (4,0) {$\hat{\mathfrak{F}_1}$};
   \node (Ka) at (6,0) {$\Phi(\mathcal{X})_{\mathcal{L}_2}$};

  \node (Sb) at (0,-2) {$\Phi(\mathcal{X})_{(\mathcal{L}_1)_S}$};
  \node (Tb) at (2,-2) {$\hat{\mathfrak{F}_2}$};
   \node (Fb) at (4,-2) {$\hat{\mathfrak{F}_2}$};
   \node (Kb) at (6,-2) {$\Phi(\mathcal{X})_{(\mathcal{L}_2)_S}$};

\node (Sd) at (1,0.7) {$\mathcal{P}_{ci}\cdot{\Phi(\mathcal{X})}_{\mathcal{L}_1}^{E_1}$};
  \node (Td) at (3,0.7) {$\hat{\mathfrak{F}''}$};
  \node (Fd) at (5,0.7) {$\hat{\mathfrak{J}}$};
  \node (Kd) at (7,0.7) {$\mathcal{P}_{c}\cdot\Phi(\mathcal{X})_{\mathcal{L}_2}$};


  \node (Sc) at (1,-1.3) {$\mathcal{P}_{ci}\cdot{\Phi(\mathcal{X})}_{(\mathcal{L}_1)_S}^{E_1}$};
  \node (Tc) at (3,-1.3) {$\hat{\mathfrak{F}''}$};
  \node (Fc) at (5,-1.3) {$\hat{\mathfrak{F}'}$};
  \node (Kc) at (7,-1.3) {$\mathcal{P}_{ci}\cdot({\Phi(\mathcal{X})}_{\mathcal{L}_2}^{E_1})_S$};
  \draw[->] (Sa) -- node[above] {} (Ta);
  \draw[->] (Sb) -- node[above] {} (Tb);
  \draw[->] (Sc) -- node[above] {} (Tc);
  \draw[->, dashed] (Ta) -- node[above] {} (Fa);
  \draw[->] (Fa) -- node[above] {} (Ka);
  \draw[->] (Fb) -- node[above] {} (Kb);
  \draw[->] (Fc) -- node[above] {} (Kc);
  \draw[->, dashed] (Tb) -- node[above] {} (Fb);
  \draw[->, dashed] (Tc) -- node[above] {} (Fc);

\draw[->] (Sd) -- node[above] {} (Td);
\draw[->, dashed] (Td) -- node[above] {} (Fd);
\draw[->] (Fd) -- node[above] {} (Kd);

  \draw[->] (Sa) to[ right=10] node[right] {} (Sb);
  \draw[->] (Sb) to[ right=12] node[left] {} (Sc);
  \draw[->] (Sa) -- node[left] {} (Sd);
  \draw[->] (Sd) -- node[left] {} (Sc);

  \draw[->] (Tb) to[ right=10] node[above right] {} (Ta);
  \draw[->] (Tc) -- node[above] {} (Tb);
  \draw[->] (Tc) to[ left=12] node[below right] {} (Td);
  \draw[->] (Td) to[ left=12] node[below right] {} (Ta);


  \draw[->] (Fa) to[ right=10] node[right] {} (Fb);

  \draw[->] (Fb) to[ right=12] node[left] {} (Fc);
  \draw[->] (Fa) -- node[left] {} (Fd);
  \draw[->] (Fd) -- node[left] {} (Fc);

   \draw[->] (Kb) to[ right=10] node[right] {} (Ka);
   \draw[->] (Kc) to[ right=12] node[left] {} (Kb);
  \draw[->] (Ka) -- node[left] {} (Kd);
  \draw[->] (Kd) -- node[left] {} (Kc);

\end{tikzpicture}
\end{equation}
\begin{lemma} 
Given objects ${\Phi(\mathcal{X})}_{\mathcal{L}_1}\in \overline{Max_2(G)}$ and $\hat{\mathfrak{F}_1}=\{G\mid \forall M\lessdot G,\ M\ is\ {p-solvable}\}\in F(\mathcal{G})$, Diagram~\eqref{eq:1.3} induces the following morphism of objects. 
\begin{equation}\label{eq:5.2}
\begin{tikzcd}[column sep=5em, row sep=5em]
  & \mathcal{X} \arrow[r, dashed,"u"] \arrow[d, dashed,"f"']\arrow[dl, "\Phi"'] &  \hat{\mathfrak{F}_1}\\
  \Phi(\mathcal{X})\arrow[r, "\mathcal{L}_1"'] 
  & {\Phi(\mathcal{X})}_{\mathcal{L}_1}
  \arrow[ur, bend left=0, "g"] 
\end{tikzcd}
\end{equation}

\end{lemma}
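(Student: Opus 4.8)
The plan is to read Diagram~\eqref{eq:5.2} as the instantiation of the general Diagram~\eqref{eq:1.3} at the concrete objects $\mathcal{X}$ (the non-solvable maximal subgroups of $G$), $\Phi(\mathcal{X})$, $\Phi(\mathcal{X})_{\mathcal{L}_1}$ and the formation $\hat{\mathfrak{F}_1}$, and to verify that each arrow is a legitimate instance of the corresponding morphism defined in Section~3 and that the resulting square commutes. Concretely I must establish three things: (i) $\Phi(\mathcal{X})_{\mathcal{L}_1}$ is a genuine member of $\overline{MAX_2(G)}$; (ii) the downward composite $f=\mathcal{L}_1\circ\Phi$ qualifies as a maximal subgroup functor; and (iii) the generating morphism evaluates to $g(\Phi(\mathcal{X})_{\mathcal{L}_1})=\hat{\mathfrak{F}_1}$, so that $u=g\circ f$ as required for commutativity. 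Steps (i)--(ii) are formal; the substance lies in (iii).

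For (i) and (ii) I would first note that $\Phi$ sends $\mathcal{X}\in MAX(G)$ into the power set $MAX_2(G)$ by its very construction, and that $\mathcal{L}_1$ is the contraction functor intersecting with $L_1=\{H\lessdot\lessdot G\mid p\mid |H|\}$; by the lemma that the power set $P(A)$ of any nonempty set $A$ is a lattice, both live in the lattice $MAX_2(G)$, so $\Phi(\mathcal{X})_{\mathcal{L}_1}=\Phi(\mathcal{X})\cap L_1$ is well-defined. To see that $f=\mathcal{L}_1\circ\Phi$ is a maximal subgroup functor I would verify the invariance clause $\theta(f(\mathcal{X}))=f(\theta(\mathcal{X}))$ for every isomorphism $\theta\colon G\to G'$: non-solvability of a maximal subgroup, the relations $H\lessdot M$ and $M\lessdot G$, and the divisibility $p\mid |H|$ are all preserved by $\theta$, so each constituent operation $\Phi$ and $\mathcal{L}_1$ commutes with $\theta$. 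The quotient-form definitions of $\mathcal{L}_1$ recorded at the end of Section~3 supply the compatibility with passage to $G/L$ that is needed for the target class to be stable under the formation operations.

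The heart of the argument is (iii), which under the rule defining the generating morphism $g$ (emptiness of the argument set signalling membership of $G$ in the assigned formation) amounts to the equivalence $\Phi(\mathcal{X})_{\mathcal{L}_1}=\emptyset \iff G\in\hat{\mathfrak{F}_1}$. The key group-theoretic step, which I would isolate as a sublemma, is a Sylow argument: if $M$ is a non-solvable maximal subgroup of $G$ with $p\mid |M|$, then $M$ is not a $p$-group, so a Sylow $p$-subgroup $P$ of $M$ is a proper nontrivial subgroup and hence lies in some maximal subgroup $H$ of $M$; this $H$ is a second maximal subgroup of $G$ belonging to $\Phi(\mathcal{X})$ with $p\mid |P|\mid |H|$, so $H\in\Phi(\mathcal{X})_{\mathcal{L}_1}$. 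Conversely any $H\in\Phi(\mathcal{X})_{\mathcal{L}_1}$ forces $p\mid |H|\mid |M|$ for its non-solvable over-group $M$. Together these show that $\Phi(\mathcal{X})_{\mathcal{L}_1}=\emptyset$ exactly when every maximal subgroup of $G$ whose order is divisible by $p$ is solvable, and since every maximal subgroup of $p'$-order is automatically $p$-solvable, this condition delivers membership in $\hat{\mathfrak{F}_1}$. That the resulting class is a genuine formation, closed under quotients and subdirect products, I would inherit from the $f_1$-construction of Section~2, which already guarantees that $f_1(s)$ is extension-closed.

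The step I expect to be the main obstacle is precisely the final identification in (iii): matching the intrinsic characterization produced by the Sylow argument, namely that all $p$-divisible maximal subgroups are solvable, with membership in the extension formation $\hat{\mathfrak{F}_1}$ phrased through non-abelian chief and composition factors. The delicate point is that $p$-solvability of a maximal subgroup is strictly weaker than solvability, so I would need to control the non-abelian composition factors that survive localization at the prime $p$ (those groups that are $p'$ on their non-abelian part) and to reconcile the maximal-subgroup description of $\hat{\mathfrak{F}_1}$ with the composition-factor description packaged by $f_1$. Here results of $PSL(2,7)$-type, as in Lemma~\ref{L-index}, are the expected tool for pinning down which simple factors may appear, and I would lean on them to close the gap between the emptiness condition and the formation $\hat{\mathfrak{F}_1}$.
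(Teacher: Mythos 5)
Your proposal is correct and follows essentially the same route as the paper: the heart of both proofs is the identical Sylow argument --- a non-solvable maximal subgroup $M$ of $G$ with $p\mid|M|$ has a maximal subgroup $H$ containing a Sylow $p$-subgroup of $M$, so that $H\in\Phi(\mathcal{X})\cap L_1$, and hence $\Phi(\mathcal{X})_{\mathcal{L}_1}=\emptyset$ forces every maximal subgroup of $G$ to be $p$-solvable (you argue contrapositively where the paper argues by contradiction). Two of your side remarks are superfluous rather than wrong: the definition of $g$ only requires the implication $\Phi(\mathcal{X})_{\mathcal{L}_1}=\emptyset\Rightarrow G\in\hat{\mathfrak{F}_1}$, not your claimed equivalence, whose converse need not hold (a non-solvable yet $p$-solvable maximal subgroup of order divisible by $p$ would make $\Phi(\mathcal{X})_{\mathcal{L}_1}$ nonempty without removing $G$ from $\hat{\mathfrak{F}_1}$) but is never used in your argument; and the ``main obstacle'' you anticipate --- reconciling the maximal-subgroup characterization with the composition-factor description via Lemma~\ref{L-index} --- does not arise, because the lemma defines $\hat{\mathfrak{F}_1}$ directly by the maximal-subgroup condition, which is exactly where your Sylow argument lands and where the paper's proof stops.
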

\begin{proof}
It suffices to show that: If $\Phi(\mathcal{X})_{\mathcal{L}_1}=\emptyset$, then $$G\in \hat{\mathfrak{F}_1}=\{G\mid \forall M\lessdot G,\ M\ is\ {p-solvable}\}.$$
Suppose, for a contradiction, that there exists a maximal subgroup $M$ of $G$ s.t. $M$ is non-$p$-solvable. We show that it is impossible. As $p||M|$, there must exist a maximal subgroup $H$ of $M$ s.t. $M_p\le H\lessdot M$, here $M_p\in Syl_p(M)$. Then $ H\in\mathcal{L}_1$ and $\Phi(\mathcal{X})_{\mathcal{L}_1}=\emptyset$ implies that $H\notin\Phi(\mathcal{X})$. It is equivalent that every maximal subgroup containing $H$ is solvable. Then $M$ is solvable, a contradiction. 
\end{proof}
\begin{corollary}
We set $H=\mathcal{L}_2$ and $\hat{\mathfrak{F}_1}=\{G\mid \forall M\lessdot G,\ M\ is\ {p-solvable}\}.$ Then Diagram \eqref{eq:5.2} induces the following morphism of objects.

\begin{equation}\label{eq:5.3}
\begin{tikzcd}[column sep=5em, row sep=5em]
  & \mathcal{X} \arrow[r, dashed,"u"] \arrow[d, dashed,"f"']\arrow[dl, "\Phi"'] &  \hat{\mathfrak{F}_1}\\
  \Phi(\mathcal{X})\arrow[r, "\mathcal{L}_2"'] 
  & {\Phi(\mathcal{X})}_{\mathcal{L}_2}
  \arrow[ur, bend left=0, "g"] 
\end{tikzcd}
\end{equation}
\end{corollary}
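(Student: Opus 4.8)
The plan is to deduce the corollary from the preceding lemma (the $\mathcal{L}_1$ case) by comparing the two contraction functors on the object $\Phi(\mathcal{X})$. For the generating morphism $g$ to induce a valid morphism of objects along Diagram~\eqref{eq:5.3}, it suffices to verify that $\Phi(\mathcal{X})_{\mathcal{L}_2}=\emptyset$ forces $G\in\hat{\mathfrak{F}_1}$, i.e.\ that every maximal subgroup of $G$ is $p$-solvable, as in the definition of $\hat{\mathfrak{F}_1}$.

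First I would record the inclusion $L_1\subseteq L_2$. If $H$ is a second maximal subgroup with $p\mid|H|$, then $p\mid|M|$ for every maximal overgroup $M\in Max(G,H)$, so the existential condition defining $L_2$ is met and $H\in L_2$. Intersecting with $\Phi(\mathcal{X})$ preserves this, giving $\Phi(\mathcal{X})_{\mathcal{L}_1}=\Phi(\mathcal{X})\cap L_1\subseteq\Phi(\mathcal{X})\cap L_2=\Phi(\mathcal{X})_{\mathcal{L}_2}$. Consequently, if $\Phi(\mathcal{X})_{\mathcal{L}_2}=\emptyset$, then its subset $\Phi(\mathcal{X})_{\mathcal{L}_1}$ is empty as well, and the preceding lemma immediately yields $G\in\hat{\mathfrak{F}_1}$.

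It is instructive that the conclusion also admits a direct argument, which exposes the subtle difference between $\mathcal{L}_1$ and $\mathcal{L}_2$. Suppose toward a contradiction that some maximal subgroup $M$ of $G$ is non-$p$-solvable; then $M$ is non-solvable with $p\mid|M|$, so $M\in\mathcal{X}$. Choose any second maximal subgroup $H\lessdot M$. For $\mathcal{L}_2$ there is no need to arrange a Sylow $p$-subgroup of $M$ inside $H$, as was required in the $\mathcal{L}_1$ proof: the single overgroup $M\in Max(G,H)$ already satisfies $p\mid|M|$, whence $H\in L_2$, while $H\in\Phi(\mathcal{X})$ because $M\in\mathcal{X}$. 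Thus $H\in\Phi(\mathcal{X})_{\mathcal{L}_2}$, contradicting emptiness.

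The only delicate point is the direction of the implication. Since $\mathcal{L}_2$ is the weaker (larger) localization, it is the emptiness of the larger set $\Phi(\mathcal{X})_{\mathcal{L}_2}$ that must be hypothesized, and one must verify $L_1\subseteq L_2$ rather than its reverse. Beyond this bookkeeping there is no real obstacle: the substantive group-theoretic content was already extracted in the $\mathcal{L}_1$ lemma, and the corollary is essentially a monotonicity statement about the two localizing functors.
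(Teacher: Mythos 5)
Your proposal is correct and matches the paper's intended route: the statement is given as an unproved corollary of the preceding lemma, and your reduction via $L_1\subseteq L_2$ (hence $\Phi(\mathcal{X})_{\mathcal{L}_1}\subseteq\Phi(\mathcal{X})_{\mathcal{L}_2}$, so emptiness of the larger set forces emptiness of the smaller, and the lemma applies) is exactly the monotonicity argument that justifies calling it a corollary. Your supplementary direct argument is also sound, and it correctly identifies the only point needing care, namely that non-$p$-solvability of a maximal subgroup $M$ already places every $H\lessdot M$ in $L_2$ without the Sylow-subgroup adjustment needed in the $\mathcal{L}_1$ case.
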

The following proof leads us to introduce an extension functor $E_1$:
$$E_1= \{H\lessdot\lessdot G \mid  \exists\ M\in Max(G,H)~s.t.~|G:M|\neq p^{\alpha}\}.$$

\begin{theorem}\label{a-index}
Given objects ${\Phi(\mathcal{X})}_{\mathcal{L}_1}\in \overline{Max_2(G)}$ and $\hat{\mathfrak{F}_1}\in F(\mathcal{G})$, Diagram~\eqref{eq:5.2} admits a completion into the following diagram.
\end{theorem}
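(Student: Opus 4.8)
The plan is to read the asserted completion through the generating morphism $g$, exactly as in the lemma realizing Diagram~\eqref{eq:5.2}. The diagram commutes precisely when $g\bigl(\mathcal{P}_{ci}\cdot\Phi(\mathcal{X})_{\mathcal{L}_1}^{E_1}\bigr)=\hat{\mathfrak{F}''}$, and by the definition of $g$ this means showing that $\mathcal{P}_{ci}\cdot\Phi(\mathcal{X})_{\mathcal{L}_1}^{E_1}=\emptyset$ forces $G\in\hat{\mathfrak{F}''}$. Since $\hat{\mathfrak{F}''}=f_1(s)$ is the extension formation generated by $\mathfrak{F}''$ and $\mathfrak{F}''\subseteq\hat{\mathfrak{F}''}$, it suffices to place $G$ in the defining class $\mathfrak{F}''$; that is, to show every $M\lessdot G$ is $p$-solvable, minimal non-$p$-solvable, or of prime-power index.

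First I would argue by contradiction in the style of the proof for $\Phi(\mathcal{X})_{\mathcal{L}_1}$. Assume the set is empty and suppose some $M\lessdot G$ fails all three alternatives, so $M$ is non-$p$-solvable, not minimal non-$p$-solvable, and $|G:M|$ is not a prime power. Non-$p$-solvability makes $M$ non-solvable, whence $M\in\mathcal{X}$; failure of minimality produces a maximal subgroup $H\lessdot M$ that is itself non-$p$-solvable. Then $p\mid|H|$ gives $H\in L_1$, while $H\lessdot M$ with $M\in\mathcal{X}$ gives $H\in\Phi(\mathcal{X})$, so $H\in\Phi(\mathcal{X})_{\mathcal{L}_1}$. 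Because $M\in Max(G,H)$ and $|G:M|\neq p^{\alpha}$, also $H\in E_1$, and therefore $H\in\Phi(\mathcal{X})_{\mathcal{L}_1}\cup E_1=\Phi(\mathcal{X})_{\mathcal{L}_1}^{E_1}$.

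The decisive step, and the one I expect to be the main obstacle, is to exhibit an element of $\Phi(\mathcal{X})_{\mathcal{L}_1}^{E_1}$ lying also in $P_{ci}$, contradicting emptiness. For the candidate $H$ this requires that for every $M'\in Max(G,H)$ one has $H_G=M'_G$ or $|M':H|$ is not a prime power. The preliminary lemmas are tailored to this: from $H\le M'$ one always has $H_G\le M'_G$, and if the cores differ then $M'_G\nleq H$, so Lemma~\ref{X=HN} yields $M'=HM'_G$ and rewrites $|M':H|=|M'_G:M'_G\cap H|$ as an index inside a normal section. The non-solvability criteria of Lemmas~\ref{core-solvable} and~\ref{strict-core-solvable} supply, for the non-solvable $G$, a second maximal subgroup whose core is stationary over all maximal subgroups above it, and such a subgroup lies in $P_{ci}$ for free; reconciling this core-stationary witness with the non-$p$-solvable witness $H$ is where the real difficulty lies. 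In the residual case where a genuine prime-power value of $|M':H|$ persists, I would attempt to invoke Lemma~\ref{L-index}, pairing the two prime-power indices that then appear over a common minimal normal section to force it abelian or a direct product of $PSL(2,7)$, which is incompatible with the non-$p$-solvability carried by $H$, closing the case.

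The hardest part will be the simultaneous bookkeeping of three indices that never occur together in any single preliminary lemma: the index $|G:M|$ feeding the extension functor $E_1$, the index $|M':H|$ governing $P_{ci}$, and the chief-factor index powering Lemma~\ref{L-index}. The Remark of Section~2 allowing $q=p$ must be respected, so that a $p$-power value of $|M':H|$ cannot inadvertently satisfy the $P_{ci}$ clause; this forces the core equality $H_G=M'_G$ to be extracted exactly in those cases. Once membership in $P_{ci}$ is secured, the contradiction with $\mathcal{P}_{ci}\cdot\Phi(\mathcal{X})_{\mathcal{L}_1}^{E_1}=\emptyset$ is immediate, every $M\lessdot G$ obeys one of the three alternatives, and hence $G\in\mathfrak{F}''\subseteq\hat{\mathfrak{F}''}$, completing Diagram~\eqref{eq:5.2} as claimed.
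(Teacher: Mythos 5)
Your reduction through $g$ and your opening move are both correct: the theorem amounts to showing that $\mathcal{P}_{ci}\cdot\Phi(\mathcal{X})_{\mathcal{L}_1}^{E_1}=\emptyset$ forces $G\in\hat{\mathfrak{F}''}$, and a maximal subgroup $M$ violating all three clauses of $\mathfrak{F}''$ does yield a non-$p$-solvable $H\lessdot M$ with $H\in\Phi(\mathcal{X})_{\mathcal{L}_1}\subseteq\Phi(\mathcal{X})_{\mathcal{L}_1}^{E_1}$. From there, however, the proposal is a plan rather than a proof, and the plan points in a direction that cannot be completed as described. You propose to contradict emptiness by exhibiting an element of ${P}_{ci}\cap\Phi(\mathcal{X})_{\mathcal{L}_1}^{E_1}$, with two candidates: your witness $H$, and the core-stationary second maximal subgroup that Lemma~\ref{core-solvable} supplies for a non-solvable group. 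Neither works as described. For $H$, the emptiness hypothesis says outright that $H\notin {P}_{ci}$, so the contradiction cannot come from proving $H\in {P}_{ci}$ directly; it must be mined from the data that $H\notin {P}_{ci}$ provides, namely some $M_3\in Max(G,H)$ with $H_G<(M_3)_G$ and $|M_3:H|$ a prime power. For the core-stationary witness, membership in ${P}_{ci}$ is free, but nothing places it below a non-solvable maximal subgroup, makes $p$ divide its order, or puts it in $E_1$, so it need not lie in $\Phi(\mathcal{X})_{\mathcal{L}_1}^{E_1}$ at all. You flag this reconciliation as ``where the real difficulty lies''; that difficulty is precisely the unproved core of the theorem.

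The paper resolves it with machinery your sketch never sets up. It works by induction on $|G|$: it passes to $G/L$ for a minimal normal subgroup $L$, checks that the emptiness hypothesis is inherited so that $G/L\in\hat{\mathfrak{F}''}$ by induction, and uses the hypothesis together with Lemma~\ref{core-solvable} to prove that $G/L$ is solvable and that $L$ is unique. Only with this structure in place does it attack a maximal subgroup $M$ with $L\nleq M$ (so $M_G=1$): there the witness $M_3$ satisfies $M_3=HL$ by Lemma~\ref{X=HN} (uniqueness of $L$ forces $L\le (M_3)_G$ while $L\nleq H$), converting the prime power $|M_3:H|$ into $|L:L\cap H|=q^{\beta}$; then a Frattini argument $G=LN_G(L_q)=LM_4$, with a Sylow $q$-subgroup of $G$ inside $M_4$, manufactures a \emph{second} maximal subgroup whose index is coprime to $q$, and it is this pair of prime-power indices over $L$ for distinct primes that allows Lemma~\ref{L-index} to fire. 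Your sketch speaks of ``pairing the two prime-power indices that then appear,'' but the hypothesis yields only one; the second exists only because of the Frattini/Sylow step, which is absent from your proposal, as are the quotient induction and the solvability of $G/L$ (the latter is also what disposes of the maximal subgroups containing $L$). Finally, your closing claim that the conclusion of Lemma~\ref{L-index} is incompatible with the non-$p$-solvability carried by $H$ is not right as stated: a direct product of copies of $PSL(2,7)$ is itself non-$p$-solvable whenever $p\in\{2,3,7\}$, and in the paper the contradiction runs through the structure of $L$ and the solvability of $G/L$, not through $H$.
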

\begin{equation}\label{eq:5.4}
\begin{tikzcd}[column sep=5em, row sep=5em]
  & \mathcal{X} \arrow[r, dashed,"u"] \arrow[d, dashed,"f"']\arrow[dl, "\Phi"'] &  \hat{\mathfrak{F}_1}\arrow[r, ""'] &  \hat{\mathfrak{F}''}\\
  \Phi(\mathcal{X})\arrow[r, "\mathcal{L}_1"'] 
  & {\Phi(\mathcal{X})}_{\mathcal{L}_1}
  \arrow[r, "E_1","\mathcal{P}_{ci}"'] \arrow[ur, bend left=0, "g" ] 
  & \mathcal{P}_{ci}\cdot{\Phi(\mathcal{X})}_{\mathcal{L}_1}^{E_1} \arrow[ur, bend left=0, "g" ]
\end{tikzcd}
\end{equation}

\begin{proof}
Obviously, $G$ is not simple and suppose $L$ is a minimal normal subgroup of $G$. Working by induction on $|G|$, we consider the quotient group $G/L$.\\
\\
Now we will show that $G/L\in \hat{\mathfrak{F}''}$. Pick any $H/L \in {\Phi(\mathcal{X}(G/L))}_{\mathcal{L}_1}^{E_1}=\emptyset$. As $\exists\ M_1/L\in Max(G/L,H/L)$~\\s.t.~$|G/L:(M_1)/L|\neq p^{\alpha}$, then $\ M_1\in Max(G,H)$ and $|G:M_1|\neq p^{\alpha}$. As $H/L \in \Phi(\mathcal{X}(G/L))_{\mathcal{L}_1}$, we have $\exists\ M_2/L\in Max(G/L,H/L)$ s.t. $M_2/L$ is~non-solvable and $p||H/L|$.  Then we have $ M_2\in Max(G,H)$ and $M_2$ is non-solvable. It leads that $H \in {\Phi(\mathcal{X})}_{\mathcal{L}_1}^{E_1}$. It is obvious that $H\notin \mathcal{P}_{ci}$ given that $H\notin \mathcal{P}_{ci}$. Applying induction, $G/L\in \hat{\mathfrak{F}''}$.\\
\\
We will show that $G/L$ is solvable and $L$ is unique. Assume that $L$ is non-$p$-solvable, which gives that  $p||L|$. Pick any maximal subgroup $M/L$ of $G/L$ and $H/L$ of $M/L$, then we have $H\in\Phi(\mathcal{X})$ and $p||H|$. So $H\in{\Phi(\mathcal{X})}_{\mathcal{L}_1}$. It is equivalent to say that $\exists M\in Max(G,H)~s.t.~H_G<M_G$ for every second maximal subgroup $H/L$ of $G/L$. By Lemma \ref{core-solvable}, $G/L$ is solvable. We claim that $L$ is unique. Suppose, for a contradiction, $L_1$ and $L_2$ are two distinct minimal normal subgroups of $G$, then both $G/L_1$ and $G/L_2$ are solvable {and so is} $G/L_1\cap L_2$. Consequently, $G$ is solvable. This is a contradiction.\\
\\
To show $G\in \hat{\mathfrak{F}''}$, it remains us to consider any maximal subgroup $M$ of $G$. We first consider the case $L\leq M$. Then we have $|G:M|=|G/L:M/L|$ and $|G:M|$ is a prime power. Now we consider the case $L\nleq M$. In this case, $G=LM$ and $M_G=1$. There is nothing to prove if $M$ is $p$-solvable. So we assume that $M$ is non-$p$-solvable. Pick any maximal subgroup $H$ of $M$, we claim that $p\nmid|H|$ and $H$ is $p$-solvable. Otherwise, we have $p||H|$, then $H\in\Phi(\mathcal{X})_{\mathcal{L}_1}$. It concludes that $\exists M_3\in Max(G,H)~s.t.~ H_G<(M_3)_G$ and $|M_3:H|=q^{\beta}$ is a prime power. By Lemma \ref{X=HN}, we get $|M_3:H|=|HL:H|=|L:L\cap H|=q^{\beta}$. Using Frattini argument, we have $G=LN_G(L_q)=LM_4$, here $Q\le N_G(L_q)\le M_4$ and $Q\in Syl_q(G)$. If $|G:M_4|={q_1}^{\beta_1}$, then $|L:L\cap M_4|=|LM_4:L|=|G:M_4|={q_1}^{\beta_1}$. Here we get $|L:L\cap H|=q^{\beta}$ and $|L:L\cap M_4|={q_1}^{\beta_1}$. This is a contradiction by Lemma \ref{L-index}. Now we consider the case $|G:M_4|\neq{q_1}^{\beta_1}$. In this case, $H_1\in {\Phi(\mathcal{X})}_{\mathcal{L}_1}^{E_1}$  for every maximal subgroup $H_1$ of $M_4$. Then $\exists M_5\in Max(G,H)~s.t. (H_1)_G<(M_5)_G$ and $|M_5:H_1|=(q_2)^{\beta_2}$ is a prime power. By Lemma \ref{X=HN}, we get $|M_5:H_1|=|H_1L:H_1|=|L:L\cap H_1|=(q_2)^{\beta_2}$. Now we get $|L:L\cap H|=q^{\beta}$ and $|L:L\cap H_1|=(q_2)^{\beta_2}$. This is also a contradiction by Lemma \ref{L-index}.\\
\\
Now for any arbitrary maximal subgroup $M$ of $G$, we have either $M$ is $p$-solvable/minimal~non-$p$-solvable or $|G:M|$ is a prime power. Hence, $G\in \hat{\mathfrak{F}''}$ and such a completion exists..
\end{proof}

\begin{theorem}\label{}
We set $H=\mathcal{L}_2$ and $\hat{\mathfrak{J}}=\{G\mid \forall M\lessdot G,\ either\ M\ is\ p-solvable\ or\ |G:M|\ is\ a\ prime\ power\}.$ Substituting $\mathcal{P}_{ci}$ to $\mathcal{P}_{c}$, Diagram~\ref{eq:5.3} admits a completion
into the following diagram. 
\begin{equation}\label{eq:5.5}
\begin{tikzcd}[column sep=5em, row sep=5em]
  & \mathcal{X} \arrow[r, "u"] \arrow[d, dashed,"f"']\arrow[dl, "\Phi"'] &  \hat{\mathfrak{F}_1}\arrow[r, ""'] &  \hat{\mathfrak{J}}\\
  \Phi(\mathcal{X})\arrow[r, "\mathcal{L}_2"'] 
  & {\Phi(\mathcal{X})}_{\mathcal{L}_2}
  \arrow[r, "\mathcal{P}_{c}"] \arrow[ur, bend left=0, "g" ] 
  & \mathcal{P}_{c}\cdot{\Phi(\mathcal{X})}_{\mathcal{L}_2}\arrow[ur, bend left=0, "g" ]
\end{tikzcd}
\end{equation}
\end{theorem}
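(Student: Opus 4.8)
The plan is to establish the only substantive arrow in the completed diagram, namely that the generating morphism $g$ carries $\mathcal{P}_{c}\cdot\Phi(\mathcal{X})_{\mathcal{L}_2}$ to $\hat{\mathfrak{J}}$. By the defining rule of $g$ this reduces to the implication
\[
\mathcal{P}_{c}\cdot\Phi(\mathcal{X})_{\mathcal{L}_2}=\emptyset\ \Longrightarrow\ G\in\hat{\mathfrak{J}},
\]
that is, every maximal subgroup $M$ of $G$ is $p$-solvable or has prime power index. I would mirror the architecture of Theorem~\ref{a-index}, replacing the localized functor $\mathcal{L}_1$ by $\mathcal{L}_2$ and the property functor $\mathcal{P}_{ci}$ by $\mathcal{P}_{c}$, and discarding the extension functor $E_1$. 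Since a $p$-solvable group lies in $\hat{\mathfrak{J}}$ at once, I may assume $G$ is non-$p$-solvable, in particular non-simple; I fix a minimal normal subgroup $L$ and argue by induction on $|G|$, passing to $G/L$.

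First I would check that the hypothesis descends, i.e. $\mathcal{P}_{c}\cdot\Phi(\mathcal{X}(G/L))_{\mathcal{L}_2}=\emptyset$, using the quotient-form functors of Section~3, the correspondence between second maximal subgroups of $G/L$ and those of $G$ above $L$, and the core identity $(H/L)_{G/L}=H_G/L$. The only delicate ingredient is the $\mathcal{L}_2$-condition: if $H/L\in L_2(G/L)$ then some maximal $M/L\supseteq H/L$ has $p\mid|M/L|$, whence $p\mid|M|$ and $H\in L_2(G)$. The induction hypothesis then gives $G/L\in\hat{\mathfrak{J}}$. If the chosen $L$ is $p$-solvable, then $L$ is either solvable or a $p'$-group, so every maximal subgroup of $L$ is $p$-solvable and $L$ meets the defining condition of $\hat{\mathfrak{J}}$; extension-closedness of $\hat{\mathfrak{J}}$ together with $G/L\in\hat{\mathfrak{J}}$ yields $G\in\hat{\mathfrak{J}}$. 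Hence I may assume $L$ is non-$p$-solvable.

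With $L$ non-$p$-solvable we have $p\mid|L|$ and $L$ non-solvable. Every second maximal $H/L$ of $G/L$ then lifts to $H\supseteq L$ with $p\mid|H|$; since each maximal $M\supseteq H$ contains the non-solvable $L$, we obtain $H\in\Phi(\mathcal{X})\cap L_2$, and emptiness forces $H\notin P_c$, i.e. some maximal $M\supseteq H$ satisfies $H_G<M_G$, which is exactly the criterion of Lemma~\ref{core-solvable}; thus $G/L$ is solvable and, as in Theorem~\ref{a-index}, $L$ is the unique minimal normal subgroup. For an arbitrary maximal $M$ of $G$: if $L\le M$ then $|G:M|=|G/L:M/L|$ is a prime power because $G/L$ is solvable. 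If $L\nleq M$ then $G=LM$ and, by uniqueness of $L$, $M_G=1$; if $M$ is $p$-solvable we are done, so assume $M$ is non-$p$-solvable. Here the characteristic feature of $\mathcal{L}_2$ enters: since $p\mid|M|$ and $M$ is non-solvable, every second maximal $H$ of $M$ lies in $\Phi(\mathcal{X})\cap L_2$ with the single witness $M$, even when $p\nmid|H|$. As $H_G\le M_G=1$, emptiness gives a maximal $M'\supseteq H$ with $1=H_G<(M')_G$, hence $L\le M'$; Lemma~\ref{X=HN} gives $M'=HL$ and $|M':H|=|L:L\cap H|$. Comparing these indices with $|G:M|=|L:L\cap M|$ and invoking Lemma~\ref{L-index}, I would force $|G:M|$ to be a prime power, completing the verification that $G\in\hat{\mathfrak{J}}$.

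The main obstacle is precisely the last case, $L\nleq M$ with $M$ non-$p$-solvable. Two points require care. Because $\mathcal{L}_2$-membership is detected through a containing maximal rather than through $p\mid|H|$, the Sylow and Frattini step that produces a second maximal over $L$ carrying a competing prime power index, needed to trigger Lemma~\ref{L-index}, must be redone in this weaker selection regime. Moreover, dropping $E_1$ and using the strict $\mathcal{P}_{c}$ in place of $\mathcal{P}_{ci}$ removes the index slack of Theorem~\ref{a-index}, so the core-increase consequence must be driven all the way to a prime power value of $|G:M|$, giving the sharper $\hat{\mathfrak{J}}$-conclusion with no minimal non-$p$-solvable alternative. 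In particular one must certify that the two resulting indices $|L:L\cap H|$ and $|L:L\cap M|$ are incompatible unless $|G:M|$ is a prime power, and exclude the $PSL(2,7)$ possibility allowed by Lemma~\ref{L-index}; this is where the genuine work lies.
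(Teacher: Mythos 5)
Your reduction of the statement to the implication $\mathcal{P}_{c}\cdot\Phi(\mathcal{X})_{\mathcal{L}_2}=\emptyset\Rightarrow G\in\hat{\mathfrak{J}}$, the induction on $|G|$, the descent of the hypothesis to $G/L$, the solvability of $G/L$ via Lemma \ref{core-solvable}, the uniqueness of $L$, and the case $L\le M$ all match the paper's proof (your treatment of the case where $L$ is $p$-solvable, via extension-closedness, is in fact more explicit than the paper's). The genuine gap is exactly the step you flag as "where the genuine work lies": the case $L\nleq M$ with $M$ non-$p$-solvable. Your plan to compare $|L:L\cap H|$ with $|L:L\cap M|$ and invoke Lemma \ref{L-index} cannot get off the ground, because Lemma \ref{L-index} needs two competing \emph{prime power} indices, and with $\mathcal{P}_{c}$ in place of $\mathcal{P}_{ci}$ the condition $H\notin P_c$ yields only the core increase $H_G<(M')_G$ --- no prime power index is available anywhere (that arithmetic input is precisely what $\mathcal{P}_{ci}$ and $E_1$ supplied in Theorem \ref{a-index}). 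Note also that the resulting $M'=HL$ contains $L$, so it is not even eligible as one of the two $L$-avoiding maximal subgroups required by Lemma \ref{L-index}. So this case needs a different idea, not a redone Frattini--Sylow step.

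The paper's idea is to use what you already established --- that \emph{every} maximal subgroup $H$ of $M$ lies in $\Phi(\mathcal{X})_{\mathcal{L}_2}$ with the single witness $M$, hence satisfies $HL\in Max(G,H)$ by Lemma \ref{X=HN} --- for all $H$ simultaneously rather than one $H$ at a time. If $L\cap M\nleq H$ for some maximal subgroup $H$ of $M$, then since $L\cap M\lhd M$ we get $M=H(L\cap M)\le HL$, so the proper subgroup $HL$ contains $ML=G$, a contradiction. Hence $L\cap M\le\Phi(M)$, so $L\cap M$ is nilpotent, in particular solvable; and $M/(L\cap M)\cong ML/L=G/L$ is solvable, so $M$ is solvable, contradicting its non-$p$-solvability. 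Thus the case cannot occur at all: one never shows $|G:M|$ is a prime power for $M$ not containing $L$, and no appeal to Lemma \ref{L-index} or any exclusion of the $PSL(2,7)$ configuration is needed. Replacing your final step by this $\Phi(M)$ argument closes the proof.
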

\begin{proof}
Work by induction on $|G|$. Obviously, $G$ is not simple and suppose $L$ is a minimal normal subgroup of $G$. We consider the quotient group $G/L$ and show that $G/L\in \hat{\mathfrak{J}}$. Pick any $H/L \in \Phi(\mathcal{X}(G/L))_{\mathcal{L}_2}$. Similar to 
the second paragraph in the proof of Theorem \ref{a-index}, we see that $H \in \Phi(\mathcal{X})_{\mathcal{L}_2}$. Applying induction, $G/L\in \hat{\mathfrak{J}}$.\\
\\
A similar argument works for that $G/L$ is solvable and $L$ is unique.\\
\\
To show $G\in \hat{\mathfrak{J}}$, it remains us to consider any maximal subgroup $M$ of $G$. We first consider the case $L\leq M$. Then we have $|G:M|=|G/L:M/L|$ and $|G:M|$ is a prime power. Now we consider the case $L\nleq M$. In this case, $G=LM$ and $M_G=1$. There is nothing to prove if $M$ is $p$-solvable. So we assume that $M$ is {non-$p$-solvable}. Pick any maximal subgroup $H$ of $M$, then $H\in \Phi(\mathcal{X})_{\mathcal{L}_2}$. It concludes that $\exists M_6\in Max(G,H)~s.t.~ H_G<(M_6)_G$ and $M_6=HL$ by Lemma \ref{X=HN}. Then we get {$L\cap M\le \Phi(M)$} and $L\cap M$ is solvable. As $G/L=ML/L\cong L/L\cap M$, $M$ is solvable. This is a contradiction.\\
\\
Now for any arbitrary maximal subgroup $M$ of $G$, we have $M$ is $p$-solvable or $|G:M|$ is a prime power. Hence, $G\in \hat{\mathfrak{J}}$.
\end{proof}

\begin{lemma}
Set $H=(\mathcal{L}_1)_S$ and $\hat{\mathfrak{F}_2}=\{G\mid \forall M\lessdot G,\ M\ is\ { either~p-solvable~or~minimal~non-p-solvable}\}.$ Diagram~\eqref{eq:1.3} induces the following morphism of objects. 
\begin{equation}\label{eq:5.6}
\begin{tikzcd}[column sep=5em, row sep=5em]
  & \mathcal{X} \arrow[r, dashed,"u"] \arrow[d, dashed,"f"']\arrow[dl, "\Phi"'] &  \hat{\mathfrak{F}_2}\\
  \Phi(\mathcal{X})\arrow[r, "(\mathcal{L}_1)_S"',""] 
  & {\Phi(\mathcal{X})}_{(\mathcal{L}_1)_S}
  \arrow[ur, bend left=0, "g"] 
\end{tikzcd}
\end{equation}
\end{lemma}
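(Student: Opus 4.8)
The plan is to unwind the generating morphism $g$ exactly as in the proof of the morphism in Diagram~\eqref{eq:5.2}: by the definition of $g$, establishing the arrow $g\colon {\Phi(\mathcal{X})}_{(\mathcal{L}_1)_S}\to\hat{\mathfrak{F}_2}$ amounts to proving the single implication
\[
{\Phi(\mathcal{X})}_{(\mathcal{L}_1)_S}=\emptyset\ \Longrightarrow\ G\in\hat{\mathfrak{F}_2}=\{G\mid \forall M\lessdot G,\ M\ \text{is}\ p\text{-solvable or minimal non-}p\text{-solvable}\}.
\]
So I would fix a group $G$ with ${\Phi(\mathcal{X})}_{(\mathcal{L}_1)_S}=\Phi(\mathcal{X})\cap L_1\cap Max_2^*(G)=\emptyset$ and argue by contradiction, assuming there is a maximal subgroup $M$ of $G$ that is neither $p$-solvable nor minimal non-$p$-solvable.

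Next I would reproduce the Sylow step used for Diagram~\eqref{eq:5.2}. Since $M$ is non-$p$-solvable it is in particular non-solvable, so $M\in\mathcal{X}$; and since $M$ is not minimal non-$p$-solvable, it possesses a maximal subgroup $W\lessdot M$ that is again non-$p$-solvable. Non-$p$-solvability of $W$ forces $p\mid|W|$, so $W\in L_1$, while $W\lessdot M\lessdot G$ together with $M\in\mathcal{X}$ gives $W\in\Phi(\mathcal{X})$. Hence $W\in\Phi(\mathcal{X})\cap L_1$, and the only thing separating $W$ from the (empty) set $\Phi(\mathcal{X})\cap L_1\cap Max_2^*(G)$ is membership in $Max_2^*(G)$, i.e.\ whether $W$ is a strictly second maximal subgroup of $G$.

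The hard part will be precisely this promotion to a strictly second maximal subgroup, and it is exactly where the strict functor $S$ enters (and correspondingly where the conclusion is relaxed from ``$p$-solvable'' in Diagram~\eqref{eq:5.2} to ``$p$-solvable or minimal non-$p$-solvable'' here). If $W$ already lies in $Max_2^*(G)$ we are finished, since then $W\in{\Phi(\mathcal{X})}_{(\mathcal{L}_1)_S}=\emptyset$, a contradiction. If not, then $W$ fails to be maximal in some maximal subgroup $N\in Max(G,W)$ with $N\neq M$; using $W\lessdot M$ and the maximality of $M,N$ one checks $W=M\cap N$ and that $W$ sits non-maximally inside $N$, say $W<T<N$. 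My plan here is to run the whole argument on a counterexample $G$ of least order (equivalently, to descend along such a chain), so that minimality forces the offending non-$p$-solvable second maximal witness to be genuinely strictly second maximal; Lemma~\ref{strict-core-solvable}, the strict analogue of Lemma~\ref{core-solvable}, is the tool I expect to use to turn the resulting core inequalities $W_G<(\,\cdot\,)_G$ into the final contradiction. I anticipate that verifying the non-$p$-solvable witness can always be taken strictly second maximal, rather than merely second maximal, is the genuine obstacle of the proof; once it is settled, the emptiness of $\Phi(\mathcal{X})\cap L_1\cap Max_2^*(G)$ is violated, the contradiction closes, and every non-$p$-solvable maximal subgroup of $G$ is seen to be minimal non-$p$-solvable, so that $G\in\hat{\mathfrak{F}_2}$ and the induced morphism exists.
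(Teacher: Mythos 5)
Your reduction is correct (emptiness of ${\Phi(\mathcal{X})}_{(\mathcal{L}_1)_S}=\Phi(\mathcal{X})\cap L_1\cap Max_2^*(G)$ must imply $G\in\hat{\mathfrak{F}_2}$), and so is the easy half: a maximal subgroup $M$ that is non-$p$-solvable but not minimal non-$p$-solvable yields $W\lessdot M$ non-$p$-solvable, hence $W\in\Phi(\mathcal{X})\cap L_1$, and the only question is membership in $Max_2^*(G)$. But your plan for the crucial case $W\notin Max_2^*(G)$ has a genuine gap. A minimal counterexample on $|G|$ buys nothing here: the hypothesis is a statement about the second-maximal-subgroup structure of $G$ itself, it does not pass to proper subgroups or quotients in any controlled way, and minimality of $|G|$ cannot force a given second maximal subgroup of $G$ to be strictly second maximal. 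Likewise Lemma~\ref{strict-core-solvable} is the wrong tool: it needs core inequalities $H_G<M_G$ as input, and the emptiness hypothesis of this lemma produces no core relations whatsoever (core relations enter only in the theorems involving the functors $\mathcal{P}_c$ and $\mathcal{P}_{ci}$). What emptiness actually gives is a solvability statement: every strictly second maximal subgroup $K$ of $G$ with $p\mid|K|$ satisfies $K\notin\Phi(\mathcal{X})$, i.e.\ every maximal subgroup of $G$ in which $K$ is maximal is solvable.

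The missing step is an \emph{upward} promotion inside $G$, which is how the paper closes this case. If $W\notin Max_2^*(G)$, there is a maximal subgroup $N$ of $G$ in which $W$ is not maximal; choose $K_1$ with $W<K_1\lessdot N$. Then $K_1$ is again a second maximal subgroup of $G$ with $p\mid|K_1|$; if $K_1\notin Max_2^*(G)$, repeat. Since the orders strictly increase and $G$ is finite, the process terminates at some $K\in Max_2^*(G)$ with $W\le K\lessdot M_7\lessdot G$ and $p\mid|K|$, which is exactly the paper's chain $H<\dots<H^{M_7}\lessdot M_7\lessdot G$. Now emptiness of $\Phi(\mathcal{X})\cap L_1\cap Max_2^*(G)$ forces $K\notin\Phi(\mathcal{X})$, so every maximal subgroup of $G$ in which $K$ is maximal, in particular $M_7$, is solvable, whence $W\le M_7$ is solvable. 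This contradicts the non-$p$-solvability of $W$ directly; no induction, no core inequalities, and no appeal to Lemma~\ref{core-solvable} or Lemma~\ref{strict-core-solvable} are needed.
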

\begin{proof}
It is trivial if $M$ is $p$-solvable for any maximal subgroup $M$ of $G$. Assume that $M$ is non-$p$-solvable and $p\mid|M|$. Then we have either $p\nmid|H|$ or $p\mid|H|$ for any maximal subgroup $H$ of $M$. In the former case, $H$ is $p$-solvable. In the latter case, we claim that $H\notin Max_2^*(G)$. Otherwise,  $H\notin\Phi(\mathcal{X})$. It gives $M$ is solvable, which is a contradiction. As $H\notin Max_2^*(G)$, then there exists a strictly second maximal subgroup $H^{M_7}$ s.t. $H<...<H^{M_7}\lessdot M_7\lessdot G$. It leads to $H^{M_7}\notin\Phi(\mathcal{X})$ and $M_7$ is solvable. So $H$ is solvable in the latter case. Hence, For any maximal subgroup $M$ of $G$, $M$ is $p$-solvable or minimal non-$p$-solvable.\\
\end{proof}

\begin{corollary}\label{SXB-e}
Set $H=(\mathcal{L}_2)_S$ and  Diagram~\eqref{eq:5.6} induces the following morphism of objects. 
\begin{equation}\label{eq:5.7}
\begin{tikzcd}[column sep=5em, row sep=5em]
  & \mathcal{X} \arrow[r, dashed,"u"] \arrow[d, dashed,"f"']\arrow[dl, "\Phi"'] &  \hat{\mathfrak{F}_2}\\
  \Phi(\mathcal{X})\arrow[r, "(\mathcal{L}_2)_S"',""] 
  & {\Phi(\mathcal{X})}_{(\mathcal{L}_2)_S}
  \arrow[ur, bend left=0, "g"] 
\end{tikzcd}
\end{equation}
\end{corollary}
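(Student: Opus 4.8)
The goal is to verify the defining property of the generating morphism $g$ for the target object $\hat{\mathfrak{F}_2}$: by the construction of $g$, it suffices to prove that if ${\Phi(\mathcal{X})}_{(\mathcal{L}_2)_S}=\emptyset$, then $G\in\hat{\mathfrak{F}_2}$, i.e. every maximal subgroup $M$ of $G$ is either $p$-solvable or minimal non-$p$-solvable. The plan is to reduce this to the Lemma already established for Diagram~\eqref{eq:5.6}, exploiting the monotonicity of the localizing functors, and then to record a direct argument that exposes the behavioural contrast between $\mathcal{L}_1$ and $\mathcal{L}_2$ emphasized in the introduction.

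First I would note the inclusion $L_1\subseteq L_2$: if $H\lessdot\lessdot G$ satisfies $p\mid|H|$ and $M\in Max(G,H)$, then $H\le M$ forces $p\mid|M|$, so $H\in L_2$. Intersecting with $Max_2^*(G)$ respects this inclusion, and since every localizing functor acts by intersection we obtain $\Phi(\mathcal{X})_{(\mathcal{L}_1)_S}=\Phi(\mathcal{X})\cap L_1\cap Max_2^*(G)\subseteq\Phi(\mathcal{X})\cap L_2\cap Max_2^*(G)={\Phi(\mathcal{X})}_{(\mathcal{L}_2)_S}$. Hence the hypothesis ${\Phi(\mathcal{X})}_{(\mathcal{L}_2)_S}=\emptyset$ forces $\Phi(\mathcal{X})_{(\mathcal{L}_1)_S}=\emptyset$, and the Lemma underlying Diagram~\eqref{eq:5.6} yields $G\in\hat{\mathfrak{F}_2}$, which is precisely the conclusion read off $g$ in Diagram~\eqref{eq:5.7}.

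Alternatively, to display the contrast between the two functors directly, I would mirror the proof of the Lemma for Diagram~\eqref{eq:5.6} with $\mathcal{L}_2$ in place of $\mathcal{L}_1$. Fix a maximal subgroup $M$; if $M$ is $p$-solvable there is nothing to prove, so assume $M$ is non-$p$-solvable, whence $p\mid|M|$, and aim to show every $H\lessdot M$ is $p$-solvable. Here the advantage of $\mathcal{L}_2$ appears immediately: any such $H$ lies in $M\in Max(G,H)$ with $p\mid|M|$, so $H\in L_2$ automatically, with no hypothesis on $p\mid|H|$. If $H\in Max_2^*(G)$ then $H\in(\mathcal{L}_2)_S$, so $H\notin\Phi(\mathcal{X})$, forcing every maximal subgroup of $G$ containing $H$ — in particular $M$ — to be solvable, a contradiction; hence $H\notin Max_2^*(G)$, and $H$ embeds into a strictly second maximal subgroup $H^{*}$ with $H<\dots<H^{*}\lessdot M^{*}\lessdot G$.

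The main obstacle is the final placement of $H^{*}$ in $(\mathcal{L}_2)_S$: unlike the $\mathcal{L}_1$ situation, where $p\mid|H|$ propagates upward to give $p\mid|H^{*}|$ and hence membership for free, the weaker datum $H\in L_2$ does not by itself certify $p\mid|M^{*}|$. I would resolve this by splitting on $p\mid|H|$, exactly as in the $\mathcal{L}_1$ proof: if $p\nmid|H|$ then $H$ is $p$-solvable directly, while if $p\mid|H|$ then $p\mid|H^{*}|$ and hence $p\mid|M^{*}|$, so $M^{*}\in Max(G,H^{*})$ witnesses $H^{*}\in L_2$; combined with $H^{*}\in Max_2^*(G)$ this gives $H^{*}\in(\mathcal{L}_2)_S$, so $H^{*}\notin\Phi(\mathcal{X})$, $M^{*}$ is solvable, and $H<M^{*}$ makes $H$ solvable, thus $p$-solvable. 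Either route shows $M$ is minimal non-$p$-solvable, completing the verification that Diagram~\eqref{eq:5.6} induces the morphism of Diagram~\eqref{eq:5.7}.
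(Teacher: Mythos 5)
Your proposal is correct and matches the paper's (implicit) treatment: the paper states this result as a corollary of the preceding lemma with no separate proof, and your first argument --- the inclusion $L_1\subseteq L_2$, so that $\Phi(\mathcal{X})_{(\mathcal{L}_2)_S}=\emptyset$ forces $\Phi(\mathcal{X})_{(\mathcal{L}_1)_S}=\emptyset$ and the lemma for Diagram~\eqref{eq:5.6} gives $G\in\hat{\mathfrak{F}_2}$ --- is precisely the justification that makes it a corollary. Your alternative direct argument (ruling out $H\in Max_2^*(G)$, then splitting on $p\mid|H|$ and passing to a strictly second maximal overgroup $H^{*}\in L_2\cap Max_2^*(G)$ whose exclusion from $\Phi(\mathcal{X})$ forces $M^{*}$, and hence $H$, to be solvable) is also sound and simply mirrors the paper's proof of the $(\mathcal{L}_1)_S$ lemma, making explicit the point where $\mathcal{L}_2$-membership comes for free while $\mathcal{L}_1$-membership requires $p\mid|H|$.
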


\begin{theorem}\label{}
Diagram~\eqref{eq:5.6} admits a completion
into the following diagram. 
\begin{equation}\label{eq:5.8}
\begin{tikzcd}[column sep=5em, row sep=5em]
  & \mathcal{X} \arrow[r, dashed,"u"] \arrow[d, dashed,"f"']\arrow[dl, "\Phi"'] &  \hat{\mathfrak{F}_2}\arrow[r, ""] &\hat{\mathfrak{F}''}\\
  \Phi(\mathcal{X})\arrow[r, "(\mathcal{L}_1)_S"',""] 
  & {\Phi(\mathcal{X})}_{(\mathcal{L}_1)_S}
  \arrow[ur, bend left=0, "g"] \arrow[r, "E_1","\mathcal{P}_{ci}"']&
\mathcal{P}_{ci}\cdot{\Phi(\mathcal{X})}_{(\mathcal{L}_1)_S}^{E_1} \arrow[ur, bend left=0, "g"]
\end{tikzcd}
\end{equation}

\end{theorem}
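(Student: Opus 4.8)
The plan is to follow the inductive scheme of Theorem~\ref{a-index} almost verbatim, modifying only the two places where the strict functor $S$ intervenes. Since the morphism $g$ from $\Phi(\mathcal{X})_{(\mathcal{L}_1)_S}$ to $\hat{\mathfrak{F}_2}$ is already supplied by the Lemma attached to Diagram~\eqref{eq:5.6}, and since $\hat{\mathfrak{F}_2}\subseteq\hat{\mathfrak{F}''}$ by the inclusions recorded in Diagram~\eqref{eq:3.1}, the left triangle of Diagram~\eqref{eq:5.8} commutes automatically. Thus it suffices to exhibit the completion on the right, i.e.\ to prove that if $\mathcal{P}_{ci}\cdot\Phi(\mathcal{X})_{(\mathcal{L}_1)_S}^{E_1}=\emptyset$ then $G\in\hat{\mathfrak{F}''}$. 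First I would dispose of the simple case (impossible here) and set up induction on $|G|$, choosing a minimal normal subgroup $L$ and passing to $G/L$.

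For the inductive step I would use the quotient-form functors defined at the close of Section~3 to check that $\mathcal{P}_{ci}\cdot\Phi(\mathcal{X}(G/L))_{(\mathcal{L}_1)_S}^{E_1}=\emptyset$. The verification is the exact analogue of the second paragraph of the proof of Theorem~\ref{a-index}: a subgroup $H/L$ lying in this set would, via the correspondence between subgroups of $G/L$ and subgroups of $G$ containing $L$, produce $H\in\Phi(\mathcal{X})_{(\mathcal{L}_1)_S}^{E_1}$ with $H\notin\mathcal{P}_{ci}$, contradicting the hypothesis. The one point needing care beyond Theorem~\ref{a-index} is that strict second maximality must be transported across this correspondence, i.e.\ that $H/L\in Max_2^*(G/L)$ forces $H\in Max_2^*(G)$ when $L\le H$; granting this, induction yields $G/L\in\hat{\mathfrak{F}''}$.

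Next I would establish that $G/L$ is solvable and that $L$ is the unique minimal normal subgroup. Assuming $L$ is non-$p$-solvable gives $p\mid|L|$, and then for every strictly second maximal subgroup $H/L$ of $G/L$ one checks $H\in\Phi(\mathcal{X})_{(\mathcal{L}_1)_S}$, whence the core condition $H_G<M_G$ for some $M\in Max(G,H)$ holds. Here is the crucial deviation from Theorem~\ref{a-index}: because we only control \emph{strictly} second maximal subgroups, I would invoke Lemma~\ref{strict-core-solvable} rather than Lemma~\ref{core-solvable} to conclude that $G/L$ is solvable. Uniqueness of $L$ then follows exactly as before, since two distinct minimal normal subgroups $L_1,L_2$ would force $G/L_1$, $G/L_2$, hence $G/(L_1\cap L_2)$, and finally $G$ itself, to be solvable, a contradiction.

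Finally I would run the maximal-subgroup analysis. For $M\lessdot G$ with $L\le M$ the index $|G:M|=|G/L:M/L|$ is a prime power. For $L\nleq M$ we have $G=LM$ and $M_G=1$; assuming $M$ is non-$p$-solvable and picking a maximal subgroup $H$ of $M$ with $p\mid|H|$, I would use $E_1$ together with Lemma~\ref{X=HN} and the Frattini argument to manufacture two maximal subgroups of $G$ avoiding $L$ with \emph{distinct} prime-power indices, contradicting Lemma~\ref{L-index}. The main obstacle, and the genuinely new ingredient relative to Theorem~\ref{a-index}, is the subcase $H\notin Max_2^*(G)$: there I would replace $H$ by a strictly second maximal subgroup $H^{M_7}$ sitting in a chain $H<\dots<H^{M_7}\lessdot M_7\lessdot G$, exactly as in the proof of the Lemma for Diagram~\eqref{eq:5.6}, transfer solvability from $M_7$ down to $H$, and thereby reduce to the strict situation that the functor $S$ controls. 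Assembling the cases shows every maximal subgroup of $G$ is $p$-solvable, minimal non-$p$-solvable, or of prime-power index, i.e.\ $G\in\hat{\mathfrak{F}''}$, which completes the diagram.
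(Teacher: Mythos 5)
Your proposal is correct and follows essentially the same route as the paper's own proof: induction on $|G|$ with a minimal normal subgroup $L$, transport of the defining sets (including strict second maximality, $H/L\in Max_2^*(G/L)\Rightarrow H\in Max_2^*(G)$) to the quotient, solvability of $G/L$ and uniqueness of $L$, and a final maximal-subgroup analysis split into the subcases $H\in Max_2^*(G)$ and $H\notin Max_2^*(G)$, the latter resolved by passing to a strictly second maximal overgroup $H^{M}$ and applying Lemma~\ref{X=HN}, the Frattini argument and Lemma~\ref{L-index}. The only difference is that you make explicit two points the paper leaves implicit --- the need for Lemma~\ref{strict-core-solvable} (in place of Lemma~\ref{core-solvable}) in the solvability step, and the inclusion $\hat{\mathfrak{F}_2}\subseteq\hat{\mathfrak{F}''}$ from Diagram~\eqref{eq:3.1} for the left triangle --- both of which are correct clarifications rather than a different argument.
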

\begin{proof}
Now we turn to prove the statement of the theorem.
Obviously, $G$ is not simple. Now we proceed to prove by induction on $|G|$. Suppose $L$ is a minimal normal subgroup of $G$ and we consider the quotient group $G/L$.\\
\\
Now we will show that $G/L\in \hat{\mathfrak{F}''}$. Pick any $H/L \in {\Phi(\mathcal{X}(G/L))}_{(\mathcal{L}_1)_S}^{E_1}$, then $H\notin \mathcal{P}_{ci}(G/L)$. By proof of Theorem \ref{a-index} and $H/L \in Max_2^*(G/L)$, then $H\in Max_2^*(G)$ and $H\in {\Phi(\mathcal{X})}_{(\mathcal{L}_1)_S}^{E_1}$. It is clear that $H\notin \mathcal{P}_{ci}$ Hence, we get $G/L\in \hat{\mathfrak{F}''}$ by induction.\\
\\
A similar argument works for that $G/L$ is solvable and $L$ is unique.\\
\\
To show $G\in \hat{\mathfrak{F}''}$, it remains us to consider any maximal subgroup $M$ of $G$. We first consider the case $L\leq M$. Then we have $|G:M|=|G/L:M/L|$ and $|G:M|$ is a prime power. Now we consider the case $L\nleq M$. In this case, $G=LM$ and $M_G=1$. There is nothing to prove if $M$ is $p$-solvable. So we assume that $M$ is non-$p$-solvable. Pick any maximal subgroup $H$ of $M$, we claim that $p\nmid|H|$ and $H$ is $p$-solvable. Otherwise, we have $p||H|$, then we consider the following subcases.\\
\\
Subcase 1: $H\in Max_2^*(G)$\\
    It concludes that $\exists M_{8}\in Max(G,H)~s.t.~ H_G<(M_{8})_G$ and $|M_{8}:H|={q_3}^{\beta_3}$ is a prime power. By Lemma \ref{X=HN}, we get $|M_{8}:H|=|HL:H|=|L:L\cap H|={q_3}^{\beta_3}$. Using Frattini argument, we have $G=LN_G(L_{q_3})=LM_{9}$, here $N_G(L_{q_3})\le M_{9}$. If $|G:M_{9}|={q_4}^{\beta_4}$, then $|L:L\cap M_{9}|=|LM_{9}:L|=|G:M_{9}|={q_4}^{\beta_4}$. Here we get $|L:L\cap H|={q_3}^{\beta_3}$ and $|L:L\cap M_{9}|={q_4}^{\beta_4}$. This is a contradiction by Lemma \ref{L-index}. Now we consider the case $|G:M_{9}|\neq{q_4}^{\beta_4}$. Pick $(M_{9})_{q_3}\le H_2\lessdot M_{9}$. It follows that $\exists M_{10}\in Max(G,H_2)~s.t. (H_2)_G<(M_{10})_G$ and $|M_{10}:H_2|=(q_5)^{\beta_5}$ is a prime power. By Lemma \ref{X=HN}, we get $|M_{10}:H_2|=|H_2L:H_2|=|L:L\cap H_2|=(q_5)^{\beta_5}$. Now we get $|L:L\cap H_2|=(q_3)^{\beta_3}$ and $|L:L\cap H_2|=(q_5)^{\beta_5}$. This is also a contradiction by Lemma \ref{L-index}.\\
\\
Subcase 2: $H\notin Max_2^*(G)$ \\
There exists a strictly second maximal subgroup $H^{M_{11}}$
s.t. $H\le...\le H^{M_{11}}\lessdot M_{11}$. It is obvious that $p||H^{M_{11}}|$. If $M_{11}$ is solvable. then $H$ is solvable. If $M_{11}$ is not solvable.
It concludes that $\exists M_{12}\in Max(G,H^{M_{11}})~s.t.~ (H^{M_{11}})_G<(M_{12})_G$ and $|M_{12}:H^{M_{11}}|={q_6}^{\beta_6}$ is a prime power. By Lemma \ref{X=HN}, we get $|M_{12}:H^{M_{11}}|=|H^{M_{11}}L:H^{M_{11}}|=|L:L\cap H^{M_{11}}|={q_6}^{\beta_6}$. Using Frattini argument, we have $G=LN_G(L_{q_6})=LM_{13}$, here $N_G(L_{q_6})\le M_{13}$. The contradiction follows with a similar argument.

Now for any arbitrary maximal subgroup $M$ of $G$, we have $M$ is $p$-solvable/minimal~non-$p$-solvable or $|G:M|$ is a prime power. Hence, $G\in \hat{\mathfrak{F}''}$.
\end{proof}

By {modifying} the set $\mathcal{P}_{ci}\cdot{\Phi(\mathcal{X})}_{(\mathcal{L}_1)_S}^{E_1}$ to  $\mathcal{P}_{ci}\cdot({\Phi(\mathcal{X})}_{\mathcal{L}_2}^{E_1})_S $, we get the following corollary.

\begin{corollary}\label{}
Diagram~\eqref{eq:5.7} admits a completion
into the following diagram.

\begin{equation}\label{eq:5.9}
\begin{tikzcd}[column sep=5em, row sep=5em]
  & \mathcal{X} \arrow[r, dashed,"u"] \arrow[d, dashed,"f"']\arrow[dl, "\Phi"'] &  \hat{\mathfrak{F}_2}\arrow[r, ""] &\hat{\mathfrak{F}'}\\
  \Phi(\mathcal{X})\arrow[r, "\mathcal{L}_2"',""] 
  & \Phi(\mathcal{X})_{(\mathcal{L}_2)_{S}} 
  \arrow[ur, bend left=0, "g"] \arrow[r, "\mathcal{P}_{ci}"']&
\mathcal{P}_{ci}\cdot({\Phi(\mathcal{X})}_{\mathcal{L}_2}^{E_1})_S\arrow[ur, bend left=0, "g"]
\end{tikzcd}
\end{equation}

\end{corollary}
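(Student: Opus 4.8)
The plan is to prove by induction on $|G|$ the assertion carried by the morphism $g$ in Diagram~\eqref{eq:5.9}: if $\mathcal{P}_{ci}\cdot({\Phi(\mathcal{X})}_{\mathcal{L}_2}^{E_1})_S=\emptyset$, then $G\in\hat{\mathfrak{F}'}$. The skeleton is that of the completion established for Diagram~\eqref{eq:5.8}, with $\mathcal{L}_2$ replacing $(\mathcal{L}_1)_S$; the payoff of this substitution is that the conclusion sharpens from ``minimal non-$p$-solvable'' ($\hat{\mathfrak{F}''}$) to ``minimal non-solvable'' ($\hat{\mathfrak{F}'}$). First I would note that $G$ is not simple, fix a minimal normal subgroup $L$, and pass to $G/L$. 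Because the localization, extension, strict and property functors $\mathcal{L}_2,E_1,\mathcal{P}_{ci}$ were all defined in quotient form at the close of Section~3, emptiness of the composite set for $G$ descends to $G/L$, so the inductive hypothesis yields $G/L\in\hat{\mathfrak{F}'}$.

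Next I would carry over, from the proof of Theorem~\ref{a-index}, the two structural reductions that $G/L$ is solvable and that $L$ is the unique minimal normal subgroup of $G$. For solvability one assumes $L$ non-$p$-solvable, so $p\mid|L|$; then for any second maximal $H/L$ of $G/L$ the containing maximal subgroup is non-solvable (it contains $L$) and has order divisible by $p$, placing $H$ in ${\Phi(\mathcal{X})}_{\mathcal{L}_2}$, and the emptiness hypothesis forces the core inequality $H_G<M_G$ demanded by Lemma~\ref{strict-core-solvable}, whence $G/L$ is solvable. Uniqueness of $L$ then follows, since two distinct minimal normal subgroups would render $G/(L_1\cap L_2)$, and so $G$, solvable.

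The core of the argument is an arbitrary maximal subgroup $M$ of $G$. If $L\le M$ then $|G:M|=|G/L:M/L|$ is a prime power, and if $M$ is $p$-solvable there is nothing to prove; so I would take $L\nleq M$ (hence $G=LM$ and $M_G=1$), $M$ non-$p$-solvable (hence $p\mid|M|$), and $|G:M|$ not a prime power, and show that $M$ is \emph{minimal non-solvable}. The decisive feature of $\mathcal{L}_2$ enters here: since $M\in Max(G,H)$ and $p\mid|M|$, \emph{every} maximal subgroup $H$ of $M$ lies in $\mathcal{L}_2$, with no side condition $p\mid|H|$. This is precisely what distinguishes $\mathcal{L}_2$ from $\mathcal{L}_1$ and lets the argument reach every maximal subgroup of $M$, upgrading the conclusion from $p$-solvability to full solvability. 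Suppose some maximal subgroup $H$ of $M$ were non-solvable; then $H\in\Phi(\mathcal{X})$ through $M$, $H\in\mathcal{L}_2$, and $H\in E_1$ since $|G:M|$ is not a $p$-power, so $H\in{\Phi(\mathcal{X})}_{\mathcal{L}_2}^{E_1}$.

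Finally I would split as in the proof for Diagram~\eqref{eq:5.8}. If $H\in Max_2^*(G)$, then $H\in({\Phi(\mathcal{X})}_{\mathcal{L}_2}^{E_1})_S$, so emptiness of the composite forces $H\notin\mathcal{P}_{ci}$, producing $M'\in Max(G,H)$ with $H_G<M'_G$ and $|M':H|$ a prime power; since $L$ is the unique minimal normal subgroup, $L\le M'_G$ and $L\nleq H$, so Lemma~\ref{X=HN} gives $M'=HL$ and $|L:L\cap H|$ a prime power, and a Frattini argument on a Sylow subgroup of $L$ followed by Lemma~\ref{L-index} forces $L$ abelian or a direct product of $PSL(2,7)$, contradicting that $L$ is non-solvable. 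If $H\notin Max_2^*(G)$, I would ascend to a strictly second maximal subgroup $H^{M_{11}}$ with $H<\cdots<H^{M_{11}}\lessdot M_{11}$: a solvable $M_{11}$ makes $H$ solvable, and a non-solvable $M_{11}$ reproduces the same Lemma~\ref{X=HN}/Frattini/Lemma~\ref{L-index} contradiction. Hence every maximal subgroup of $M$ is solvable, $M$ is minimal non-solvable, and $G\in\hat{\mathfrak{F}'}$. The step I expect to be the main obstacle is this last, non-strict subcase: with $\mathcal{L}_1$ the hypothesis $p\mid|H|$ automatically propagates up the chain to give $p\mid|H^{M_{11}}|$ and hence membership in $L_1$, whereas the $\mathcal{L}_2$-condition is phrased through containing maximal subgroups, so one must check that some maximal subgroup above $H^{M_{11}}$ — presumably $M_{11}$ itself — has order divisible by $p$ and non-$p$-power index; verifying that a non-solvable $M_{11}$ indeed forces $p\mid|M_{11}|$ in this inductive situation (or arranging the ascending chain to remain inside a $p$-divisible maximal subgroup) is the delicate point where the $\mathcal{L}_2$-versus-$\mathcal{L}_1$ distinction must be handled with care.
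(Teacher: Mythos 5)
Your skeleton is the paper's skeleton: the paper proves this corollary by declaring the proof ``similar'' to the completion of Diagram~\eqref{eq:5.8} and writing out only the genuinely new step, and everything you reconstruct --- induction on $|G|$, descent of emptiness to $G/L$, solvability of $G/L$ and uniqueness of $L$ via the core lemmas, the case $L\nleq M$ with $M_G=1$, and above all the observation that $\mathcal{L}_2$-membership requires only $p\mid|M|$ for a containing maximal subgroup (no condition $p\mid|H|$), which is what upgrades the conclusion from minimal non-$p$-solvable ($\hat{\mathfrak{F}''}$) to minimal non-solvable ($\hat{\mathfrak{F}'}$) --- is exactly the ``similar'' part, correctly identified.

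The problem is that the step you explicitly defer as ``the main obstacle'' is the \emph{entire} content of the paper's own proof, so your proposal has a gap precisely where the paper does its work. Since the strict functor sits outside in $({\Phi(\mathcal{X})}_{\mathcal{L}_2}^{E_1})_S$, the emptiness hypothesis can only be invoked for subgroups verified to lie in $Max_2^*(G)$; this affects not just your subcase $H\notin Max_2^*(G)$, but already the auxiliary subgroup $H_2$ chosen inside the Frattini-produced maximal subgroup $M_{12}$ when $|G:M_{12}|$ is not a prime power --- a check you compress away in your first subcase. The paper's resolution is a dichotomy on cores that you anticipated but did not carry out: when the relevant subgroup is not strictly second maximal, ascend to a strictly second maximal $(H_2)^{M_{14}}\lessdot M_{14}$ and split on $(M_{14})_G$. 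If $(M_{14})_G=1$, then $G=LM_{14}$ and one argues on $|G:M_{14}|$ directly: a prime-power index gives two incompatible prime-power indices against $L$ and hence the Lemma~\ref{L-index} contradiction, while a non-prime-power index puts the chain back into $({\Phi(\mathcal{X})}_{\mathcal{L}_2}^{E_1})_S$ and the earlier argument applies. If $(M_{14})_G\neq 1$, uniqueness of $L$ forces $L\le (M_{14})_G$; then $M_{14}$ cannot be $p$-solvable (otherwise $L$ would be $p$-solvable, a contradiction), so $M_{14}$ is non-$p$-solvable, which automatically gives $p\mid|M_{14}|$ and hence the $\mathcal{L}_2$-condition and $(H_2)^{M_{14}}\in({\Phi(\mathcal{X})}_{\mathcal{L}_2}^{E_1})_S$, after which Lemma~\ref{X=HN}, the Frattini argument and Lemma~\ref{L-index} close the case as before. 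Note that your conjectured fix --- ``a non-solvable $M_{11}$ forces $p\mid|M_{11}|$'' --- is realized in exactly this form, except that what one actually extracts is non-$p$-solvability (via $L\le M_{14}$), not mere non-solvability, and that is what makes the divisibility automatic.
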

\begin{proof}
The proof is similar. But for the case  $|G:M_{12}|\neq q_4^{\beta_4}$ in the first subcase, we need to consider $H_2\in Max_2^*(G)$ or $H_2\notin Max_2^*(G)$. In the fisrt case, $H_2\in ({\Phi(\mathcal{X})}_{\mathcal{L}_2}^{E_1})_S $ and the proof is similar. In the second case, there exists a strictly second maximal subgroup $H^{M_{14}}$
s.t. $H_2\le...\le (H_2)^{M_{14}}\lessdot M_{14}$. Now we have either $(M_{14})_G=1$ or  $(M_{14})_G\neq 1$. In the former case, we get $|L:L\cap M_{14}|=q_7^{\beta_7}$ if $|G:M_{14}|=q_7^{\beta_7}$, a contradiction. Otherwise, $|G:M_{14}|\neq q_7^{\beta_7}$. Then $H_2\in ({\Phi(\mathcal{X})}_{\mathcal{L}_2}^{E_1})_S $ and the proof is similar. In the latter case, if $M_{14}$ is $p$-solvable, then $L$ is $p$-solvable. This is a contradiction. If $M_{14}$ is non-$p$-solvable, then $p||M_{14}|$ and $H^{M_{14}}\in ({\Phi(\mathcal{X})}_{\mathcal{L}_2}^{E_1})_S $. The contradiction is similar.
The similar proof works for the subcase 2.    
\end{proof}
\section{Applications}
The order-reversing map is common in Algebra. Notable instances include the Galois correspondence in galois theory and the antitone relationship between an affine variety and its coordinate ring in commutative algebra. In this paper, the morphism $g$ is \emph{order-reversing}: For \(\mathcal{X}_1, \mathcal{X}_2 \in MAX_2(G)\),
\[
\mathcal{X}_1 \subseteq \mathcal{X}_2 \quad \Longrightarrow \quad g(\mathcal{X}_1)\supseteq g(\mathcal{X}_2).
\]
Especially, we equip both dom($g$) and cod($g$) with two operations $\wedge~\&~\vee$. On dom($g$), the operations $\wedge$ and $\vee$ are defined as set-theoretic intersection and union, respectively. On cod($g$), we define
$\mathfrak{F}_1\wedge\mathfrak{F}_2$ as the intersection of the formation $\mathfrak{F}_1$ and $\mathfrak{F}_2$; $\mathfrak{F}_1\vee\mathfrak{F}_2$ as the the formation product $\mathfrak{F}_1\mathfrak{F}_2=\{G|G~\text{is~a~group~and~}G^{\mathfrak{F}_2}\in\mathfrak{F}_1\}$. Then the morphism 
$g$ behaves like an anti-homomorphism on certain elements. 

\begin{example}
Set $a=\Phi(\mathcal{X})_{\mathcal{L}_1}$ and $b=\Phi(\mathcal{X})_{(\mathcal{L}_1)_\mathcal{S}}$. Then $g(a)=\hat{\mathfrak{F}_1}$ and    $g(b)=\hat{\mathfrak{F}_2}$. As
$$g(a\wedge b)=g(b)=\hat{\mathfrak{F}_2}=\hat{\mathfrak{F}_1}\hat{\mathfrak{F}_2}=g(a)\vee g(b)$$ and 
$$g(a\vee b)=g(a)=\hat{\mathfrak{F}_1}=\hat{\mathfrak{F}_1}\cap\hat{\mathfrak{F}_2}=g(a)\wedge g(b),$$
we get
$$g(a\wedge b)=g(a)\vee g(b);$$
$$g(a\vee  b)=g(a)\wedge g(b).$$
\end{example}
\begin{example}
Set $a=\Phi(\mathcal{X})_{\mathcal{L}_1}$ and $b=\mathcal{P}_{ci}\cdot\Phi(\mathcal{X})_{(\mathcal{L}_1)}^{E_1}$. Then $g(a)=\hat{\mathfrak{F}_1}$ and    $g(b)=\hat{\mathfrak{F}''}$. As
$$g(a\wedge b)=g(b)=\hat{\mathfrak{F}''}=\hat{\mathfrak{F}_1} \hat{\mathfrak{F}''}=g(a)\vee g(b)$$ and
$$g(a\vee b)=g(a)=\hat{\mathfrak{F}_1}=\hat{\mathfrak{F}_1}\cap\hat{\mathfrak{F}''}=g(a)\wedge g(b),$$
we get
$$g(a\wedge b)=g(a)\vee g(b);$$
$$g(a\vee b)=g(a)\wedge g(b).$$
\end{example}

\end{document}